\providecommand{\tabularnewline}{\\}
\newtheorem{theorem}{Theorem}[section]
\newtheorem{lemma}[theorem]{Lemma}
\title{Mixed GMsFEM for the simulation of waves in highly heterogeneous media}
\author{Eric T. Chung\thanks{Department of Mathematics, The Chinese University of Hong Kong, Hong Kong SAR.
This research is partially supported by the Hong Kong RGC General Research Fund (Project number: 400813).} \;
and \; Wing Tat Leung\thanks{Department of Mathematics, Texas A\&M University, College Station, TX.}
}
\begin{document}

\maketitle

\begin{abstract}
Numerical simulations of waves in highly heterogenous media have important applications,
but direct computations are prohibitively expensive.
In this paper, we develop a new generalized multiscale finite element method
with the aim of simulating waves at a much lower cost.
Our method is based on a mixed Galerkin type method
with carefully designed basis functions that can capture various scales in the solution.
The basis functions are constructed based on some local snapshot spaces and local spectral problems defined on them.
The spectral problems give a natural ordering of the basis functions in the snapshot space
and allow systematically enrichment of basis functions.
In addition, by using a staggered coarse mesh,
our method is energy conserving and has block diagonal mass matrix,
which are desirable properties for wave propagation.
We will prove that our method has spectral convergence,
and present numerical results to show the performance of the method.
\end{abstract}

\section{Introduction}

Numerical simulations of waves are important in many practical areas.
For example, in computational seismology, accurate simulations of acoustic waves play
a crucial role in determining subsurface properties \cite{virieux:889,virieux:1933,saenger:SM293,GJI:GJI3620,symes:2602,masson:N33}.
Traditionally, the wave equation can be numerically solved by
finite difference methods, finite element
methods, discontinuous Galerkin methods and spectral methods \cite{basabe:562,kaser:76,Pelties:2010kx,Hermann:2011fk,GJI:GJI5221,springerlink:10.1007/s00450-010-0109-1,GJI:GJI1653,GJI:GJI967,GJI:GJI4985,CMS,JCAM-lod,nmtma,ChungEngquist06,ChungEngquist09}.
In many applications, the media of interest are highly heterogeneous
and contain many scales.
The above methods require the use of very fine meshes to
fully resolve the multiscale structure of the media.
Though the numerical solutions to the wave equation have been shown
to be accurate when the computational grid is fine enough \cite{delprat-jannaud:T37},
the practical limitations in discretization caused by limitations
in computational power restrict this accuracy.
It is therefore necessary to develop numerical approaches
that can incorporate fine-scale features into coarse-grid based methods,
where the coarse grid size is independent of
the medium scales.

There are in literature some model reduction techniques
aiming at solving the wave equation in media with multiple scales.
For instances, in \cite{AADA,Geophysics,gao2015numerical,owhadi2008numerical,korostyshevskaya2006matrix,vdovina2005operator}, some numerical homogeneization and upscaling based
techniques are developed.
In these methods, the heterogeneous medium is replaced by an effective medium
which can be efficiently resolved by a coarse mesh, giving certain reduction in computational cost.
In addition, various
multiscale methods are developed in \cite{chung2014generalized,gao2015generalized,abdulle2011finite,engquist2009multi,fish2004space,abdulle2014finite,
engquist2012multiscale,jiang2010analysis},
which also aim at discretizing the wave equation in a coarse grid by the use of the multiscale finite element method \cite{efendiev2009multiscale}
or by the use of the heterogeneous multiscale method \cite{weinan2007heterogeneous}.
While the above are very successful methods,
they can produce solutions with limited accuracy
and sometimes fail to give correct solutions.
Thus, there is a need to systematically enhance the accuracy
within the model reduction framework.
In this paper, we will focus on the recently developed Generalized Multiscale Finite Element Method (GMsFEM) \cite{egh12}.
The GMsFEM is a generalization of the classical multiscale finite element method (MsFEM) \cite{efendiev2009multiscale}
in the sense that multiple basis functions can be systematically added
to each coarse element.
The method consists of two stages: the offline stage and the online stage.
In the offline stage, a space of locally supported snapshot functions is constructed.
The snapshot space contains a large set of functions and can be used to capture essentially all fine-scale
features of the solution. A space reduction is then performed by the use of a local spectral decomposition,
and the dominant modes are taken as the multiscale basis functions.
We notice that all these computations are done before the actual simulations of the solution.
In the online stage, when a given source term or boundary condition is given, the above offline basis functions
are used to obtain an approximate solution.
One can also adaptively select the basis functions in various coarse elements in order to
obtain better efficiency and accuracy \cite{chung2014adaptive,chung2015residual}.

The purpose of this paper is to develop a new GMsFEM for the wave equation.
There are previous works on GMsFEM for the wave equation
based on the second order formulation and a discontinuous Galerkin framework \cite{chung2014generalized,gao2015generalized}.
These methods give accurate simulations of waves in coarse meshes, but are lack of energy conservation.
To develop a scheme with energy conservation, we consider the wave equation in the pressure-velocity formulation (called the mixed formulation).
We will use some ideas from our earlier work on mixed GMsFEM for high contrast flows \cite{chung2015mixed}.
However, the method in \cite{chung2015mixed} cannot be used directly for the wave equation
since it is based on a piecewise constant approximation for pressure, which is not accurate for the wave equation,
and the velocity basis functions
give mass matrix that is not block diagonal.
To derive a new GMsFEM with block diagonal mass matrix
and energy conservation, we will use a staggered mesh \cite{ChungEngquist06,ChungEngquist09,OLS,SDG-cd,SDG-curl,JCAM-meta},
where it is shown that such idea can give a numerical scheme
with block diagonal mass matrix and energy conservation.
In addition, this idea can give a smaller dispersion error \cite{dispersion,JCP-max}.
In \cite{AADA,Geophysics}, we have applied a staggered coarse mesh
in the numerical upscaling framework for the wave equation,
where only one multiscale basis function is used for each coarse region.
In this paper, we will develop a mixed GMsFEM based on a staggered mesh,
giving block diagonal mass matrix, energy conservation and systematic enrichment of multiscale basis functions.
In our new method, we will construct multiscale basis functions for both the pressure and the velocity.
The construction follows the general methodology of GMsFEM by solving local spectral problems,
which are carefully designed to achieve our goals.
The spectral problems give a natural ordering of the basis functions according to the magnitudes
of the corresponding eigenvalues.
Dominant modes, those eigenfunctions with small eigenvalues, are taken as the basis functions
and can be added systematically due to the natural ordering.
Furthermore, we prove that the method is convergent, and the convergence rate is inversely
proportional to the smallest eigenvalue of those eigenfunctions not selected in the basis set.
Another advantage of using the mixed formulation is that perfectly matched layers
can be used in conjunction with our method easily,
and we will illustrate this in our numerical experiments.

The paper is organized as follows. In Section \ref{sec:problem},
we will state our problem and define some notations as well as the staggered mesh.
In Section \ref{sec:basis} and Section \ref{sec:method}, we will give the constructions
of our multiscale basis functions and our mixed GMsFEM.
Section \ref{sec:proof} is devoted to the convergence analysis of our method.
In Section \ref{sec:num}, we will present some numerical results
to show the performance of our method.
The paper ends with a Conclusion.

\section{Problem description}
\label{sec:problem}

Let $\Omega \subset \mathbb{R}^2$ be the computational domain.
We consider the following wave equation in mixed formulation
\begin{align}
\kappa \frac{\partial v}{\partial t} +\nabla p & =0\; \quad\text{in} \;\Omega\label{eq:mixed}\\
\rho \frac{\partial p}{\partial t} +\nabla\cdot v & =f\; \quad\text{in} \;\Omega\label{eq:mixed2}
\end{align}
with homogeneous Dirichlet boundary condition $p=0$ on $\partial\Omega$.
We assume that the bulk modulus $\kappa^{-1}$ and the density $\rho$ are highly oscillatory.
The aim of the paper is to construct multiscale basis functions,
which provide accurate and efficient approximations of the pressure $p$ and the velocity $v$ on a coarse grid.
The homogeneous Dirichlet boundary condition
is chosen to simplify the discussions.
Our method can be easily applied to other types of boundary conditions
as well as perfectly matched layers.
Moreover, the extension of our method to the three dimensional case is straightforward.

First, we present the triangulation of the domain $\Omega$.
Let $\mathcal{T}^H_0$ be an initial coarse triangulation of the domain.
For each triangle in $\mathcal{T}^H_0$, we refine it into three triangles by connecting the centroid
to the three vertices.
The resulting refinement is called $\mathcal{T}^H$, which is our coarse mesh.
Our multiscale basis functions are defined in this coarse mesh $\mathcal{T}^H$.
We use $\mathcal{E}_p$ to denote the set of all edges in the initial triangulation
and use $\mathcal{E}_p^0 = \mathcal{E}_p \backslash \partial\Omega$.
Moreover, we use $\mathcal{E}_v$ to denote the set of new edges formed by the above division process.
Note that, the set of all edges $\mathcal{E}^H$ of the coarse mesh is $\mathcal{E}^H = \mathcal{E}_p \cup \mathcal{E}_v$,
and the set of all interior edges $\mathcal{E}^{H,0}$ of the coarse mesh is $\mathcal{E}^{H,0} = \mathcal{E}^0_p \cup \mathcal{E}_v$.
The fine mesh $\mathcal{T}^h$ is obtained by refining $\mathcal{T}^H$ in the conforming way.
We use $\mathcal{E}^h$ to denote the set of edges in the fine mesh $\mathcal{T}^h$.
An illustration of the above definitions is shown in Figure~\ref{fig:mesh}.
In particular, the solid lines are the edges of the initial triangulation $\mathcal{T}^H_0$.
By our construction, these lines also represent edges in $\mathcal{E}_p$.
Moreover, the dash lines represent the new edges formed by the subdivision process.
Note that, the coarse mesh $\mathcal{T}^H$ is defined as the union of all the new triangles obtained from the subdivision process.
We will discuss later the motivations and advantages of using such a mesh.

\begin{figure}[ht]
\centering
\includegraphics[scale=0.4]{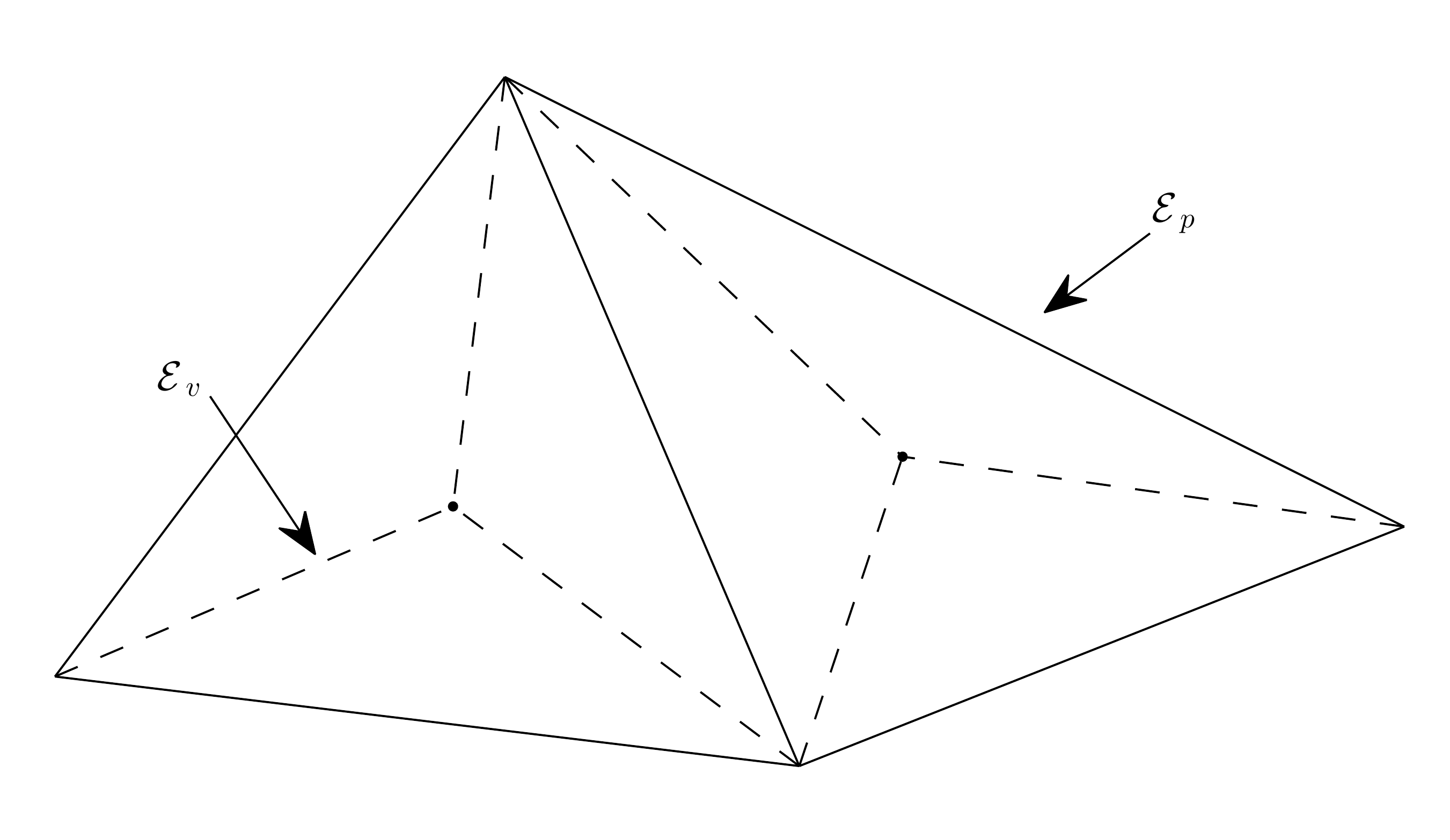}

\protect\caption{An illustration of the subdivision process and
the definition of $\mathcal{T}^H$.}
\label{fig:mesh}
\end{figure}

%Using the above coarse mesh $\mathcal{T}^H$, we obtain a fine mesh $\mathcal{T}^h$ by refining $\mathcal{T}^H$ in the standard way.
The wave equations (\ref{eq:mixed})-(\ref{eq:mixed2}) can be discretized
on the fine mesh $\mathcal{T}^h$ by the standard Raviart-Thomas finite element (RT0) method.
Let $(V_h,Q_h)$ be the standard RT0 space for $(v,p)$ with respect to the fine mesh $\mathcal{T}^h$.
Then, the RT0 method reads: find
$v_h \in V_h$ and $p_h \in Q_h$ such that
\begin{eqnarray}
\int_{\Omega} \kappa \frac{\partial v_h}{\partial t} \cdot w - \int_{\Omega} p_h \nabla \cdot w
 &=& 0, \quad \forall w\in V_h, \label{eq:me1} \\
\int_{\Omega} \rho \frac{\partial p_h}{\partial t} q + \int_{\Omega} q \nabla \cdot v_h
&=& \int_{\Omega} f q, \quad \forall q\in Q_h. \label{eq:me2}
\end{eqnarray}
Notice that the RT0 scheme (\ref{eq:me1})-(\ref{eq:me2}) does not have a block diagonal matrix.
We will present a modified scheme for (\ref{eq:mixed})-(\ref{eq:mixed2}) based on the above RT0 method (\ref{eq:me1})-(\ref{eq:me2}).
The resulting method has the advantage that the mass matrix is
block diagonal.
We will use similar ideas as in \cite{ChungEngquist06,ChungEngquist09}.

The main idea is to decouple the degrees of freedom for the velocity
on the subset $\mathcal{E}^h  \cap \mathcal{E}^0_p$ of fine grid edges,
which are the fine grid edges lying in $\mathcal{E}^0_p$.
In particular, we do not enforce the continuity of the normal components of velocity
on the fine grid edges in $\mathcal{E}^h  \cap \mathcal{E}^0_p$.
Moreover, we will introduce additional pressure variables
in order to penalize the normal jumps of velocity on these edges.

\begin{figure}[ht]
\centering
\includegraphics[scale=0.4]{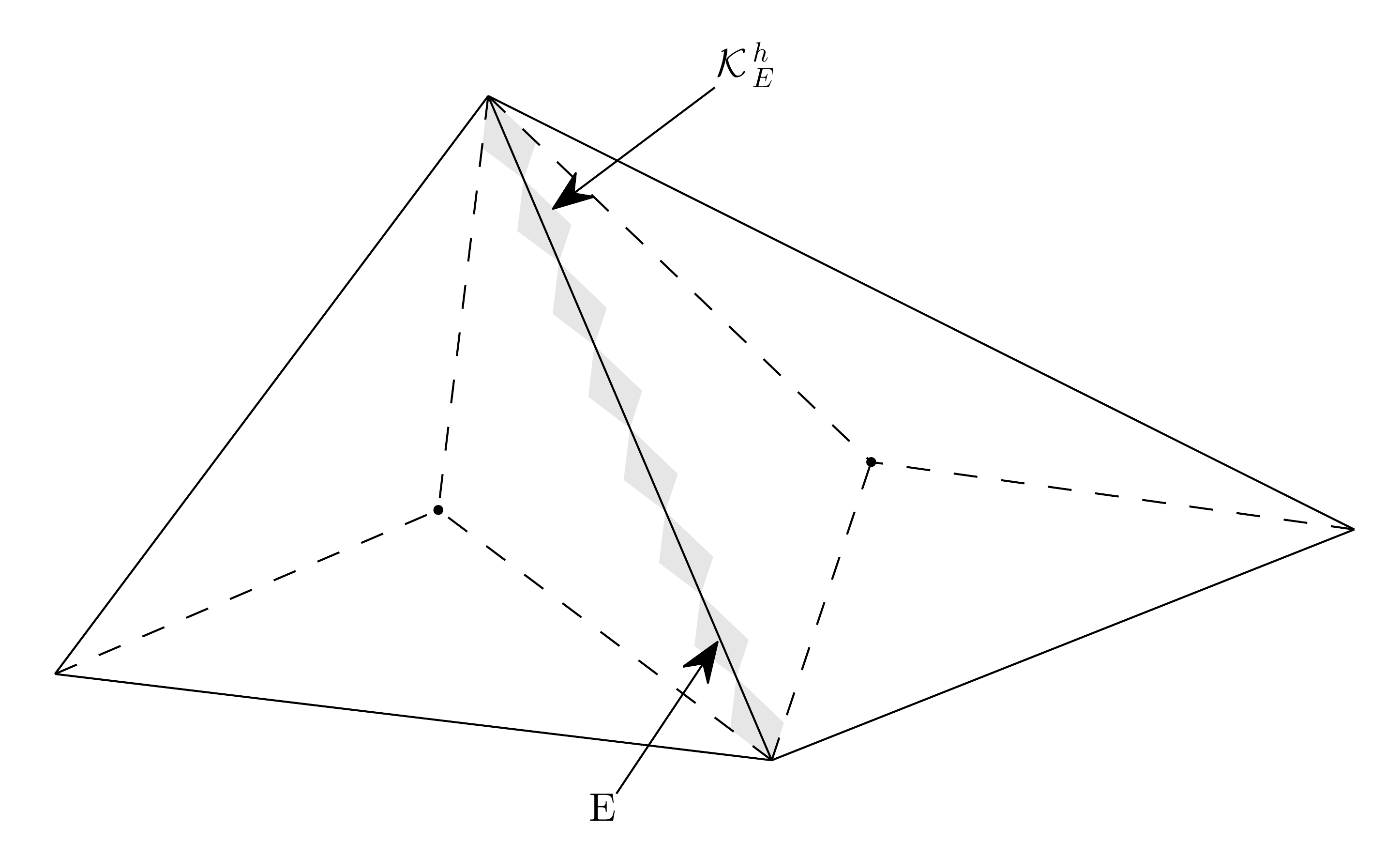}

\protect\caption{The definition of $\mathcal{K}^h_E$. The elements of $\mathcal{K}^h_E$
are shown as shaded triangles.}
\label{fig:kh}
\end{figure}

We let $\widehat{V}_h$ be the decoupled velocity space.
That is, $\widehat{V}_h$ is defined by using $V_h$ and by decoupling the normal components of velocity on
the fine grid edges in $\mathcal{E}^h  \cap \mathcal{E}^0_p$.
We will introduce an additional pressure space $\widetilde{Q}_h \subset L^2(\Omega)$ as follows.
Let $\mathcal{K}^h \subset \mathcal{T}^h$ be the set of fine mesh elements having non-empty intersection with $\mathcal{E}_p$.
We also let $\mathcal{K}^h_E \subset \mathcal{K}^h$ be a subset containing fine mesh elements
with non-empty intersection with $E\in \mathcal{E}_p$.
See Figure~\ref{fig:kh} for an illustration.
Then we define $q \in \widetilde{Q}_h$
by the following conditions:
\begin{itemize}
%\item $q|_{\tau} \in P^0(\tau)$ when $\tau\in \mathcal{T}^h \backslash \mathcal{K}^h$;
\item $q|_{\tau} \in P^1(\tau)$ and $q|_e \in P^0(e)$, where $\tau\in \mathcal{K}^h$ and $e$ is the edge of $\tau$ lying in $\mathcal{E}_p$;
\item $\int_{\tau} q = 0$ for all $\tau \in \mathcal{K}^h$;
\item $q$ is continuous on all edges $e \in \mathcal{E}^h \cap \mathcal{E}_p^0$.
\end{itemize}
Note that functions in $\widetilde{Q}_h$ have supports in $\mathcal{K}^h$.
Then we define $\widehat{Q}_h = Q_h + \widetilde{Q}_h$.
Note that we can impose the homogeneous Dirichlet boundary condition $p=0$ in the space $\widehat{Q}_h$
and we denote the resulting space by $\widehat{Q}_{h,0}$.
To construct $\widehat{Q}_{h,0}$,
we first define a subspace $\widetilde{Q}_{h,0} \subset \widetilde{Q}_h$
by $\widetilde{Q}_{h,0} = \{ q\in \widetilde{Q}_h \, : \,q|_e = 0, \, \forall e\in \mathcal{E}_p \cap\partial\Omega \}$.
Then we define $\widehat{Q}_{h,0} = Q_h + \widetilde{Q}_{h,0}$.
We remark that the increase in the dimension by decoupling $V_h$ to form $\widehat{V}_h$
is the same as the increase in the dimension by enriching the space $Q_h$ to form $\widehat{Q}_h$.

The modified
numerical scheme is stated as follows.
We find $v_h \in \widehat{V}_h$ and $p_h \in \widehat{Q}_{h,0}$ such that
\begin{eqnarray}
\int_{\Omega} \kappa \frac{\partial v_h}{\partial t} \cdot w - \int_{\Omega} p^{(I)}_h \nabla \cdot w
+ \sum_{e\in\mathcal{E}^h\cap \mathcal{E}_p^0} \int_e p^{(B)}_h [ w \cdot n] &=& 0, \quad \forall w\in \widehat{V}_h, \label{eq:rt1} \\
\int_{\Omega} \rho \frac{\partial p_h}{\partial t} q + \int_{\Omega} q^{(I)} \nabla \cdot v_h - \sum_{e\in\mathcal{E}^h\cap \mathcal{E}_p^0} \int_e q^{(B)} [ v_h \cdot n]
&=& \int_{\Omega} f q, \quad \forall q\in \widehat{Q}_{h,0}, \label{eq:rt2}
\end{eqnarray}
where $q^{(I)}$ and $q^{(B)}$ denote the components of $q$ in the spaces $Q_h$ and $\widetilde{Q}_h$
respectively, for any $q\in\widehat{Q}_h$.
The solution $(v_h,p_h)$ of (\ref{eq:rt1})-(\ref{eq:rt2}) is considered as the reference solution.
In the following sections, we will construct multiscale solution $(v_H,p_H)$ that gives good approximation of $(v_h,p_h)$
and derive the corresponding error bound.
Since the purpose of this paper is the construction and the analysis of a new multiscale method,
the error analysis for the scheme (\ref{eq:rt1})-(\ref{eq:rt2}) is not considered in this paper.

\section{Multiscale basis functions}
\label{sec:basis}

In this section,
we will introduce multiscale basis functions
and give the constructions of the multiscale approximation spaces $V_H$ and $Q_H$
for approximating $v_h$ and $p_h$ respectively.
We emphasize that the basis functions and the corresponding multiscale method are defined with respect to the coarse mesh $\mathcal{T}^H$.

For a coarse element $K\in\mathcal{T}^H$,
we define $V_h(K)$ as the restriction of $V_h$ in $K$
and $Q_h(K)$ as the restriction of $Q_h$ in $K$.
In addition, $V_{h,0}(K)$ is the subspace of $V_h(K)$
and contains vector fields whose normal components are zero on $\partial K$.

Below, we will give the definitions for the spaces $Q_H$ and $V_H$.
Our generalized multiscale finite element method reads:
find $(v_H,p_H) \in V_H \times Q_H$ such that
\begin{eqnarray}
\int_{\Omega} \kappa \frac{\partial v_H}{\partial t} \cdot w - \int_{\Omega} p_H \nabla \cdot w
 &=& 0, \quad \forall w\in V_H, \label{eq:main1} \\
\int_{\Omega} \rho \frac{\partial p_H}{\partial t} q + \int_{\Omega} q \nabla \cdot v_H
&=& \int_{\Omega} f q, \quad \forall q\in Q_H. \label{eq:main2}
\end{eqnarray}

There are totally three set of basis functions.
The first set of basis functions can be considered as a generalization of the RT0 element.
The second set of basis functions gives enrichments of the normal component of velocity
across coarse grid edges.
The third set of basis functions corresponds to the standing modes within coarse elements
with zero boundary conditions.

\subsection{The first basis set}

We use $V_H^{(1)}$ and $Q_H^{(1)}$ to denote the first set of basis functions for the velocity $v$ and the pressure $p$.
The space $Q_H^{(1)}$ is taken as the piecewise constant space with respect to the coarse mesh $\mathcal{T}^H$.
We will define the space $V_H^{(1)}$ as follows.
For each coarse edge $E \in \mathcal{E}^H$, we define $\omega_E$
as the union of all coarse elements having the edge $E$.
For each $E \in \mathcal{E}^H$, we define one basis function $\phi_E^{(1)}$ whose support is $\omega_E$.
The basis $\phi_E^{(1)}$ is defined by finding $(\phi_E^{(1)}, p_E^{(1)}) \in V_h(K) \times Q_h(K)$ such that
\begin{eqnarray}
\int_{K} \kappa \, \phi_E^{(1)} \cdot w - \int_{K} p_E^{(1)} \nabla \cdot w  &=& 0, \quad \forall w\in V_{h,0}(K), \label{eq:local1} \\
 \int_{K} q \nabla \cdot \phi_E^{(1)}
&=& \int_{K} c_E q, \quad \forall q\in Q_h(K) \label{eq:local2}
\end{eqnarray}
for each $K \subset \omega_E$, with the following boundary condition
\begin{equation}
\phi_E^{(1)} \cdot n =
\begin{cases}
1, \quad &\text{on }\,  E, \\
0, \quad &\text{on } \,\partial K \backslash E
\end{cases}
\end{equation}
where $n$ denote the unit normal vectors on edges.
In (\ref{eq:local2}), the constant $c_E = |E| / |K|$.
The space $V_H^{(1)}$ is defined by
\begin{equation}
V_H^{(1)} = \text{span} \Big\{ \phi_E^{(1)} \, : \, E \in \mathcal{E}^H \Big\}.
\end{equation}
We remark that scheme (\ref{eq:main1})-(\ref{eq:main2}) with $V_H = V_H^{(1)}$ and $Q_H = Q_H^{(1)}$
can be regarded as a generalization of the classical RT0 method.

\subsection{The second basis set}

We use $V_H^{(2)}$ to denote the second set of basis functions for the velocity $v$.
We will see below that this corresponds to enrichments of velocity with respect to coarse grid edges.
Notice that, in the second basis set, we only enrich the approximation space for the velocity.
Let $E\in\mathcal{E}^H$.
Note that $E\cap \mathcal{E}^h$ defines a partition for $E$ by fine grid edges.
We define $D_E$ be the space of piecewise constant functions
on $E$ with respect to the partition $E\cap \mathcal{E}^h$.
Let $\overline{D}_E$ be the subspace of $D_E$ containing functions with zero mean.
We write $\overline{D}_E = \text{span} \{ \delta_{E,i} \, : \,  i=1,2,\cdots, N_E \}$, where $N_E$ is the dimension of $\overline{D}_E$.
For each $E \in \mathcal{E}^H$, we define a set of basis functions $\phi_{E,j}^{(2)}$ whose support is $\omega_E$.
We first find $(\psi_{E,i}^{(2)}, q_{E,i}^{(2)}) \in V_h(K) \times Q_h(K)$ such that
\begin{eqnarray}
\int_{K} \kappa \, \psi_{E,i}^{(2)} \cdot w - \int_{K} q_{E,i}^{(2)} \nabla \cdot w  &=& 0, \quad \forall w\in V_{h,0}(K), \label{eq:local3} \\
 \int_{K} q \nabla \cdot \psi_{E,i}^{(2)}
&=& 0, \quad \forall q\in Q_h(K) \label{eq:local4}
\end{eqnarray}
for each $K \subset \omega_E$, with the following boundary condition
\begin{equation}
\psi_{E,i}^{(2)} \cdot n =
\begin{cases}
\delta_{E,i}, \quad &\text{on }\,  E, \\
0, \quad &\text{on } \,\partial K \backslash E.
\end{cases}
\end{equation}
We next define a snapshot space $V_{E,\text{snap}}^{(2)}$ on $E$ by
\begin{equation}
V_{E,\text{snap}}^{(2)} = \text{span} \Big\{ \psi_{E,i}^{(2)} \, : \,  \forall \, \delta_{E,i} \in \overline{D}_E \Big\}.
\end{equation}
We remark that the snapshot space $V_{E,\text{snap}}^{(2)}$ is large and dimension reduction is necessary.
To obtain a reduced dimension space,
we use the following spectral problem. Find $\phi\in V_{E,\text{snap}}^{(2)}$ and $\lambda\in\mathbb{R}$ such that
\begin{equation}
\int_E  (\phi \cdot n) \, (w\cdot n) = \lambda \int_{\omega_E} \kappa\, \phi \cdot w, \quad \forall w \in V_{E,\text{snap}}^{(2)}.
\label{eq:spectral}
\end{equation}
We arrange the eigenvalues in increasing order, that is, $\lambda_{E,1} < \lambda_{E,2} < \cdots < \lambda_{E, N_E}$.
For each coarse edge $E$, we take the first $n_E$ eigenfunctions $\phi_{E,j}^{(2)}$. We then define
\begin{equation}
V_{E,\text{off}}^{(2)} = \text{span} \Big\{ \phi_{E,j}^{(2)} \, : \,  j=1,2,\cdots, n_E \Big\}.
\end{equation}
Finally, the space $V_H^{(2)}$ is defined as
\begin{equation}
V_H^{(2)} = \sum_{E\in\mathcal{E}^H} V_{E,\text{off}}^{(2)}.
\end{equation}
%One can take $V_H = V_H^{(1)} + V_H^{(2)}$ and $Q_H = Q_H^{(1)}$
%in scheme (\ref{eq:main1})-(\ref{eq:main2}).

\subsection{The third basis set}

We use $V_H^{(3)}$ and $Q_H^{(3)}$ to denote the third set of basis functions for the velocity $v$ and the pressure $p$.
We will see below that these basis functions correspond to some standing modes within coarse elements.
Let $K\in\mathcal{T}^H$ be a given coarse element.
We consider the following spectral problem: find $\psi \in V_{h,0}(K)$, $p\in Q_h(K)$ and $\mu\in\mathbb{R}$
such that
\begin{eqnarray}
\int_{K} \kappa \, \psi \cdot w - \int_{K} p \nabla \cdot w  &=& 0, \quad \forall w\in V_{h,0}(K), \label{eq:local5} \\
 \int_{K} q \nabla \cdot \psi
&=& \mu \int_{K} \rho \, p \, q, \quad \forall q\in Q_h(K) \label{eq:local6}
\end{eqnarray}
subject to the zero mean condition $\int_K p = 0$.
Notice that the spaces $V_{h,0}(K)$ and $Q_h(K)$ are considered as the snapshot spaces.
We arrange the eigenvalues in increasing order, that is, $\mu_{K,1} < \mu_{K,2} < \cdots < \mu_{K,M_K}$,
where $M_K+1$ is the dimension of $Q_{h}(K)$.
We will take the first $m_K$ eigenfunctions $(\phi_{K,j}^{(3)}, p_{K,j}^{(3)})$ as basis.
We define
\begin{equation}
V_{K,\text{off}}^{(3)} = \text{span} \Big\{ \phi_{K,j}^{(3)} \, : \, j=1,2,\cdots, m_K \Big\}
\end{equation}
and
\begin{equation}
Q_{K,\text{off}}^{(3)} = \text{span} \Big\{ p_{K,j}^{(3)} \, : \, j=1,2,\cdots, m_K \Big\}.
\end{equation}
Then we define
\begin{equation}
V_H^{(3)} = \sum_{K\in \mathcal{T}^H} V_{K,\text{off}}^{(3)} \quad \text{ and } \quad
Q_H^{(3)} = \sum_{K\in \mathcal{T}^H} Q_{K,\text{off}}^{(3)}.
\end{equation}
Finally, we can take $V_H = V_H^{(1)} + V_H^{(2)}+ V_H^{(3)}$ and $Q_H = Q_H^{(1)} + Q_H^{(3)}$
as the approximation spaces
in the scheme (\ref{eq:main1})-(\ref{eq:main2}).

\section{The mixed GMsFEM}
\label{sec:method}

As discussed in the previous section,
we can define the mixed GMsFEM for the wave equation by the system
(\ref{eq:main1})-(\ref{eq:main2}) together with
the choices $V_H = V_H^{(1)} + V_H^{(2)}+ V_H^{(3)}$ and $Q_H = Q_H^{(1)} + Q_H^{(3)}$
as the approximation spaces.
However, the resulting scheme
does not have a block diagonal mass matrix, and is
therefore slow in the time stepping process.
The reason for getting a non-diagaonal mass matrix is that the basis functions in $V_H^{(1)}$ and $V_H^{(2)}$
have overlapping supports on two adjacent coarse elements.
In particular, the basis functions for the velocity are coupled through the coarse grid edges $\mathcal{E}^H$.
In the following,
we will modify these spaces so that the velocity basis functions have disjoint supports.
One important feature is that we will only decouple the basis functions on the coarse grid edges
belonging to $\mathcal{E}_p^0$ only.
We do not decouple the velocity basis functions for all coarse edges.
The resulting velocity basis functions have disjoint supports
on triangles of the initial coarse mesh $\mathcal{T}^H_0$, see Figure~\ref{fig:mesh}.

Let $E \in \mathcal{E}^0_p$ be a coarse edge.
We recall that the basis functions in the space $V_{H}^{(1)}$ and $V_{E,\text{off}}^{(2)}$
have supports in $\omega_E$
and their normal components have common values on $E$.
We will decouple this continuity, and call the resulting space as $\widehat{V}_{H}^{(1)}$ and $\widehat{V}_{E,\text{off}}^{(2)}$ respectively.
In particular, the normal component of velocity does not necessarily have a single value on $E$.
Globally, we define
\begin{equation}
\widehat{V}_H^{(2)} = \sum_{E\in\mathcal{E}^H} \widehat{V}_{E,\text{off}}^{(2)}.
\end{equation}

We will introduce a new pressure space in order to penalize the normal continuity of velocity on $E\in\mathcal{E}_p^0$.
Let $E\in\mathcal{E}_p$. This new pressure space is denoted by $Q_{E,\text{off}}^{(2)}$, and its dimension is $n_E+1$.
We recall that $\mathcal{K}_E^h \subset \mathcal{K}^h$ is the subset of fine elements having non-empty intersection with $E$, see Figure~\ref{fig:kh}.
The basis functions of $Q_{E,\text{off}}^{(2)}$
have supports on $\mathcal{K}_E^h$.
Each $q \in Q_{E,\text{off}}^{(2)}$ is defined by the following conditions:
\begin{itemize}
\item $q|_{\tau} \in P^1(\tau)$ and $q|_e \in P^0(e)$, where $\tau\in \mathcal{K}_E^h$, and $e$ is the edge of $\tau$ lying in $E$;
\item $q$ is continuous on $E$, and $q = \phi \cdot n$ on $E$ for some $\phi \in V_H^{(1)} \cup V_H^{(2)}$;
\item $\int_{\tau} q = 0$ for all $\tau\in \mathcal{K}_E^h$.
\end{itemize}
We write
\begin{equation*}
Q_{E,\text{off}}^{(2)} = \text{span} \Big\{ p_{E}^{(1)} \Big\} + \text{span} \Big\{ p_{E,j}^{(2)} \, : \, j=1,2,\cdots, n_E \Big\}
\end{equation*}
where $p_{E}^{(1)}=1$ on $E$ and $p_{E,j}^{(2)} = \phi_{E,j}^{(2)} \cdot n$ on $E$.
We define
\begin{equation}
\widehat{Q}_H^{(2)} = \sum_{E\in \mathcal{E}_p} Q_{E,\text{off}}^{(2)}.
\end{equation}
Next, we discuss the boundary condition.
We consider Dirichlet boundary condition $p=0$ for pressure.
We will need to modify the space $\widehat{Q}_H^{(2)}$ for the boundary condition.
We define $\widehat{Q}_{H,0}^{(2)}$ by
\begin{equation}
\widehat{Q}_{H,0}^{(2)} = \sum_{E\in \mathcal{E}_p^0} Q_{E,\text{off}}^{(2)}
\end{equation}
where the sum is taken over all interior edges $E\in\mathcal{E}_p^0$.
We can take $V_H = \widehat{V}_H^{(1)} + \widehat{V}_H^{(2)}+ V_H^{(3)}$ and $Q_H = Q_H^{(1)} + \widehat{Q}_{H,0}^{(2)} + Q_H^{(3)}$.
Since the space $V_H$ is not in $H(\text{div})$, we will replace the variational form (\ref{eq:main1})-(\ref{eq:main2})
by the following.
We find $(v_H,p_H) \in V_H \times Q_H$ such that
\begin{eqnarray}
\int_{\Omega} \kappa \frac{\partial v_H}{\partial t} \cdot w - \int_{\Omega} p^{(I)}_H \nabla \cdot w
 + \sum_{E \in \mathcal{E}_p^0} \int_E p_H^{(B)} [ w \cdot n] &=& 0, \quad \forall w\in V_H, \label{eq:main3} \\
\int_{\Omega} \rho \frac{\partial p_H}{\partial t} q + \int_{\Omega} q^{(I)} \nabla \cdot v_H  - \sum_{E\in  \mathcal{E}_p^0} \int_E q^{(B)} [ v_H \cdot n]
&=& \int_{\Omega} f q, \quad \forall q\in Q_H \label{eq:main4}
\end{eqnarray}
where we use $q^{(B)}$ to denote the component of $q$ in $\widehat{Q}_{H,0}^{(2)}$
and $q^{(I)}$ to denote the component of $q$ in $Q_H^{(1)} + Q_H^{(3)}$,
for any $q\in Q_H$.
Equations (\ref{eq:main3}) and (\ref{eq:main4}) give our mixed GMsFEM
for the wave equation (\ref{eq:mixed})-(\ref{eq:mixed2}).

%\subsection{Time discretization}

%In this section, we will derive a stability condition for the time step size $\Delta t$.

For the time discretization,
we will apply the leap-frog scheme to (\ref{eq:main3})-(\ref{eq:main4}).
The velocity $v_H$ are approximated at times $t_n = n \Delta t$, and the pressure $p_H$
are approximated at times $t_{n+\frac{1}{2}} = (n+\frac{1}{2}) \Delta t$, $n=0,1,\cdots$.
We let $v_H^n$ and $p_H^{n+\frac{1}{2}}$ be the approximate solutions at times $t_n$ and $t_{n+\frac{1}{2}}$ respectively.
The leap-frog scheme reads:
\begin{eqnarray}
\int_{\Omega} \kappa\, \frac{v_H^{n+1}-v_H^n}{\Delta t} \cdot w - \int_{\Omega} p^{(I,n+\frac{1}{2})}_H \nabla \cdot w
 + \sum_{E \in \mathcal{E}_p^0} \int_E p_H^{(B,n+\frac{1}{2})} [ w \cdot n] &=& 0, \quad \forall w\in V_H, \label{eq:main5} \\
\int_{\Omega} \rho\, \frac{p_H^{n+\frac{3}{2}}  - p_H^{n+\frac{1}{2}}}{\Delta t} q + \int_{\Omega} q^{(I)} \nabla \cdot v^{n+1}_H  - \sum_{E\in  \mathcal{E}_p^0} \int_E q^{(B)} [ v^{n+1}_H \cdot n]
&=& \int_{\Omega} f(t_{n+1},\cdot) q, \quad \forall q\in Q_H \label{eq:main6}
\end{eqnarray}
where $p_H^{(B,n+\frac{1}{2})}$ denotes the component of $p_H^{n+\frac{1}{2}}$ in $\widehat{Q}_{H,0}^{(2)}$
and $p_H^{(I,n+\frac{1}{2})}$ denotes the component of $p_H^{n+\frac{1}{2}}$ in $Q_H^{(1)} + Q_H^{(3)}$.
We remark that the stability estimate for $\Delta t$
can be obtained by standard techniques and the inverse estimate, see for example \cite{Geophysics}.

\section{Theory}
\label{sec:proof}

In this section, we will prove the stability and convergence of the
mixed GMsFEM using the basis constructed in Section \ref{sec:method}.
We will also show that our method is energy conserving.
To do these,
we will first introduce some
preliminary definitions which will be used in our analysis. Then,
we will prove the energy conservation,
stability and convergence for the mixed GMsFEM (\ref{eq:main3})-(\ref{eq:main4})
with the use of
$V_H = \widehat{V}_H^{(1)} + \widehat{V}_H^{(2)}+ V_H^{(3)}$ and $Q_H = Q_H^{(1)} + \widehat{Q}_{H,0}^{(2)} + Q_H^{(3)}$
as the approximation spaces.
We will assume zero initial values to simplify notations and analysis.
We will also assume that the wave equation is solved from the initial time $t=0$
to some finite time $t=T>0$.

We define the inner product for the space $\widehat{V}_h$ by
\begin{align*}
(v,w)_{V} & =\int_{\Omega}\kappa\, v\cdot w
\end{align*}
and the inner product for the space $\widehat{Q}_{h,0}$ by
\[
(p,q)_{Q}=\int_{\Omega} \rho \, p \, q.
\]
Then, the corresponding norms in the spaces $\widehat{V}_h$ and $\widehat{Q}_{h,0}$
are induced by the inner products
and are defined as
\begin{equation}
\|v\|_{V}^{2}=(v,v)_{V} \quad \text{ and } \quad \|p\|_{Q}^{2}=(p,p)_{Q}.
\label{eq:norm}
\end{equation}
Note that the above inner products and norms
are also well-defined for $V_H$ and $Q_H$
since $V_H \subset \widehat{V}_h$ and $Q_H \subset \widehat{Q}_{h,0}$.

\subsection{Energy conservation and stability}

In this section,
we will prove that the method (\ref{eq:main3})-(\ref{eq:main4}) is energy conserving
and is stable with respect to the norms (\ref{eq:norm}).
In particular, we will prove the following theorem.

\begin{theorem}
Let $v_H \in V_H$ and $p_H\in Q_H$ be the solutions of (\ref{eq:main3})-(\ref{eq:main4}). Then we have the following energy conservation property
\begin{equation}
\frac{d}{dt} \frac{1}{2} \big( \|v_H\|_V^2 + \|p_H\|_Q^2 \big) = 0
\label{eq:conservation}
\end{equation}
provided $f=0$. Moreover, the following stability holds
\begin{equation}
\max_{0\leq t\leq T} \Big( \| v_H(t, \cdot )\|^2_V + \|p_H(t,\cdot)\|^2_Q \Big)
 \leq  4 \Big( \int_0^T \| \rho^{-1} f \|_Q \Big)^2.
 \label{eq:stab}
\end{equation}
\end{theorem}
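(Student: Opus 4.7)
The plan is to prove both parts simultaneously by testing equations (\ref{eq:main3}) and (\ref{eq:main4}) with the solution itself, exploiting the designed antisymmetry of the pressure--velocity coupling. Specifically, I would set $w = v_H \in V_H$ in (\ref{eq:main3}) and $q = p_H \in Q_H$ in (\ref{eq:main4}), noting that the decomposition $q = q^{(I)} + q^{(B)}$ prescribed in the scheme, when applied to $q = p_H$, simply reproduces the natural splitting of $p_H$ into its $Q_H^{(1)} + Q_H^{(3)}$ part and its $\widehat{Q}_{H,0}^{(2)}$ part. Adding the two tested identities, the volume coupling $\int_\Omega p_H^{(I)} \nabla \cdot v_H$ and the edge coupling $\sum_{E \in \mathcal{E}_p^0} \int_E p_H^{(B)} [v_H \cdot n]$ appear with opposite signs and cancel, leaving
\[
\frac{1}{2}\frac{d}{dt}\Big(\|v_H\|_V^2 + \|p_H\|_Q^2\Big) = \int_\Omega f\, p_H.
\]

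The energy conservation (\ref{eq:conservation}) follows immediately upon setting $f = 0$. For the stability estimate (\ref{eq:stab}), I would rewrite the right-hand side as a $\rho$-weighted inner product, $\int_\Omega f p_H = (\rho^{-1} f, p_H)_Q$, and invoke Cauchy--Schwarz to control it by $\|\rho^{-1} f\|_Q \, \|p_H\|_Q$. Writing $E(t) = \|v_H(t,\cdot)\|_V^2 + \|p_H(t,\cdot)\|_Q^2$, integrating in time and using the zero initial data $E(0) = 0$ gives
\[
E(t) \leq 2 \int_0^t \|\rho^{-1} f(s,\cdot)\|_Q \, \|p_H(s,\cdot)\|_Q \, ds.
\]
Setting $M(T) = \max_{0\leq s\leq T} E(s)$ and bounding $\|p_H(s,\cdot)\|_Q \leq \sqrt{M(T)}$ under the integral, then taking the maximum on the left over $t \in [0,T]$, yields $M(T) \leq 2\sqrt{M(T)} \int_0^T \|\rho^{-1} f\|_Q \, ds$, from which $\sqrt{M(T)} \leq 2 \int_0^T \|\rho^{-1} f\|_Q \, ds$ and hence the factor $4$ in (\ref{eq:stab}) follow.

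The main obstacle, to my mind, is not analytical but bookkeeping: one must verify that the $(I)$/$(B)$ decomposition is well-defined and matches exactly for the test function $p_H$, so that the two cancelling terms really are identical rather than merely structurally similar. Since $\widehat{V}_H^{(1)}$ and $\widehat{V}_H^{(2)}$ are precisely the spaces obtained by decoupling continuity of $v \cdot n$ across edges in $\mathcal{E}_p^0$, while $\widehat{Q}_{H,0}^{(2)}$ was introduced precisely to supply dual variables penalizing those jumps, this compatibility should hold by construction, after which the remainder of the argument is routine.
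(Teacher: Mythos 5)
Your proposal is correct and follows essentially the same route as the paper: test with $w=v_H$, $q=p_H$, let the volume and edge coupling terms cancel by antisymmetry, then apply Cauchy--Schwarz and integrate in time, closing the estimate by bounding $\|p_H\|_Q$ by the maximum of the energy (the paper absorbs the term via a Young-type inequality rather than dividing by $\sqrt{M(T)}$, but this is the same argument and yields the same constant $4$).
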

\begin{proof}
We take $w = v_H$ and $q = p_H$ in (\ref{eq:main3})-(\ref{eq:main4}),
\begin{equation}
\int_{\Omega} \kappa \frac{\partial v_H}{\partial t} \cdot v_H + \int_{\Omega} \rho \frac{\partial p_H}{\partial t} p_H
= \int_{\Omega} f p_H.
\label{eq:energy}
\end{equation}
Thus, (\ref{eq:conservation}) holds when $f=0$.

In addition, using (\ref{eq:energy}) and the Cauchy-Schwarz inequality, we have
\begin{equation}
\frac{d}{dt} \frac{1}{2} \big( \|v_H\|_V^2 + \|p_H\|_Q^2 \big) \leq \| \rho^{-1} f \|_Q \, \|p_H\|_Q.
\end{equation}
Integrating in time from $0$ to $s$,
\begin{equation*}
\| v_H(s, \cdot )\|^2_V + \|p_H(s,\cdot)\|^2_Q \leq 2 \int_0^T \| \rho^{-1} f \|_Q \, \|p_H(t,\cdot)\|_Q \leq 2 \max_{0\leq t\leq T} \|p_H\|_Q \int_0^T \| \rho^{-1} f \|_Q.
\end{equation*}
So, we have
\begin{equation*}
\max_{0\leq t\leq T} \Big( \| v_H(t, \cdot )\|^2_V + \|p_H(t,\cdot)\|^2_Q \Big)
 \leq 2 \max_{0\leq t\leq T} \|p_H(t,\cdot)\|_Q \int_0^T \| \rho^{-1} f \|_Q.
\end{equation*}
Using the Cauchy-Schwarz inequality,
\begin{equation*}
\max_{0\leq t\leq T} \Big( \| v_H(t, \cdot )\|^2_V + \|p_H(t,\cdot)\|^2_Q \Big)
 \leq \frac{1}{2} \max_{0\leq t\leq T} \|p_H(t,\cdot)\|^2_Q + 2 \Big( \int_0^T \| \rho^{-1} f \|_Q \Big)^2.
\end{equation*}
This implies (\ref{eq:stab}).

\end{proof}

\subsection{Convergence}

In this section, we will prove the convergence of the mixed GMsFEM (\ref{eq:main3})-(\ref{eq:main4}).
Note that we can write $p_h^{(I)} \in Q_h$ in the following way
\begin{equation}
p_h^{(I)} = \sum_{K\in\mathcal{T}^H} \Big( a_{K,0} + \sum_{j=1}^{M_K} a_{K,j} p_{K,j}^{(3)} \Big)
\label{eq:f1}
\end{equation}
where $a_{K,0}$ is the average value of $p_h^{(I)}$ on $K$, and $a_{K,j}$ are determined by
\begin{equation*}
a_{K,j} = (p_h^{(I)}, p_{K,j}^{(3)})_Q, \quad\quad j=1,2,\cdots, M_K.
\end{equation*}
We remark that the dimension of $Q_h(K)$ is $M_K+1$.
In addition, we can write $p_h^{(B)} \in \widetilde{Q}_{h,0}$ in the following way
\begin{equation}
p_h^{(B)} = \sum_{E\in\mathcal{E}_p^0} \Big( b_{E,0}  p_{E}^{(1)}+ \sum_{j=1}^{N_E} b_{E,j} p_{E,j}^{(2)} \Big)
\label{eq:f2}
\end{equation}
where $b_{E,0}$ is the average value of $p_h^{(B)}$ on $E$, and $b_{E,j}$ are determined by
\begin{equation*}
b_{E,j} = \int_E p_h^{(B)} \phi_{E,j}^{(2)}\cdot n, \quad\quad j=1,2,\cdots, N_E.
\end{equation*}
On the other hand, we can write $v_h \in \widehat{V}_h$ in the following way
\begin{equation}
v_h = \sum_{E\in\mathcal{E}^H} \Big( c_{E,0} \phi_E^{(1)} + \sum_{j=1}^{N_E} c_{E,j} \phi_{E,j}^{(2)} \Big)
+ \sum_{K\in\mathcal{T}^H} \sum_{j=1}^{M_K} d_{K,j} \phi_{K,j}^{(3)}
\label{eq:f3}
\end{equation}
where $c_{E,0}$ is the average value of $v_h \cdot n$ on $E$, $c_{E,j}$ and $d_{K,j}$ are determined by
\begin{equation*}
c_{E,j} = \int_E (v_h \cdot n) (\phi_{E,j}^{(2)} \cdot n), \quad\quad j=1,2,\cdots, N_E
\end{equation*}
and
\begin{equation*}
d_{E,j} = \int_{K} \kappa\, v_h \cdot \phi_{K,j}^{(3)}, \quad\quad j=1,2,\cdots, M_K
\end{equation*}
respectively.

Using (\ref{eq:f1})-(\ref{eq:f3}),
we define the following interpolants $\pi(p_h^{(I)}) \in Q_H, \pi(p_h^{(B)}) \in Q_H$
and $\pi(v_h) \in V_H$ by
\begin{equation}
\begin{split}
\pi(p_h^{(I)}) &= \sum_{K\in\mathcal{T}^H} \Big( a_{K,0} + \sum_{j=1}^{m_K} a_{K,j} p_{K,j}^{(3)} \Big), \\
\pi(p_h^{(B)}) &= \sum_{E\in\mathcal{E}_p^0} \Big( b_{E,0} + \sum_{j=1}^{n_E} b_{E,j} p_{E,j}^{(2)} \Big), \\
\pi(v_h) &= \sum_{E\in\mathcal{E}^H} \Big( c_{E,0} \phi_E^{(1)} + \sum_{j=1}^{n_E} c_{E,j} \phi_{E,j}^{(2)} \Big)
+ \sum_{K\in\mathcal{T}^H} \sum_{j=1}^{m_K} d_{K,j} \phi_{K,j}^{(3)}.
\end{split}
\label{eq:inter}
\end{equation}
Notice that in the above definitions of interpolants,
 we only sum over all basis functions in the spaces $Q_H$ and $V_H$,
 instead of suming over all functions as in (\ref{eq:f1})-(\ref{eq:f3}).

Next, we prove the following lemma concerning some properties of the interpolants defined in (\ref{eq:inter}).
\begin{lemma}
\label{lem:inter}
The interpolants $\pi(p_h^{(I)}) \in Q_H, \pi(p_h^{(B)}) \in Q_H$
and $\pi(v_h) \in V_H$ defined in (\ref{eq:inter}) satisfy the following conditions
\begin{eqnarray}
\int_{\Omega} (p_h^{(I)}-\pi(p^{(I)}_h)) \nabla \cdot w  &=& 0 \label{eq:inter1} \\
 \sum_{E \in \mathcal{E}_p^0} \int_E (p_h^{(B)}-\pi(p_h^{(B)})) [ w \cdot n] &=& 0  \label{eq:inter2}
 \end{eqnarray}
 for all $w\in V_H$, and
 \begin{eqnarray}
 \int_{\Omega} q^{(I)} \nabla \cdot (v_h-\pi(v_h)) &=& 0 \label{eq:inter3} \\
 \sum_{E\in  \mathcal{E}_p^0} \int_E q^{(B)} [ (v_h-\pi(v_h)) \cdot n] &=& 0 \label{eq:inter4}
\end{eqnarray}
for all $q\in Q_H$.
\end{lemma}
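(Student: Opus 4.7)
The plan is to verify each of the four identities independently by writing the residuals $p_h^{(I)}-\pi(p_h^{(I)})$, $p_h^{(B)}-\pi(p_h^{(B)})$, and $v_h-\pi(v_h)$ as the tails of the expansions (\ref{eq:f1})--(\ref{eq:f3}) (the terms with indices above the GMsFEM cutoffs $m_K$ and $n_E$), and then showing that each tail is orthogonal to the relevant test basis function in $V_H$ or $Q_H$. All the orthogonalities I need are built into the three local problems: (\ref{eq:local1})--(\ref{eq:local2}) and (\ref{eq:local3})--(\ref{eq:local4}) pin down the divergences of the edge-based bases, the edge eigenproblem (\ref{eq:spectral}) provides $L^2(E)$ orthogonality of the second set, and the mixed eigenproblem (\ref{eq:local5})--(\ref{eq:local6}) yields the $\rho$-weighted pressure orthogonality that underpins the third set.

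For (\ref{eq:inter1}), the residual equals $\sum_{K}\sum_{j>m_K} a_{K,j}\, p_{K,j}^{(3)}$, and I check each type of basis $w\in V_H$ in turn. If $w=\phi_E^{(1)}$, then (\ref{eq:local2}) shows that $\nabla\cdot w$ is the constant $c_E$ on each $K\subset\omega_E$, so the integral reduces to $c_E\int_K p_{K,j}^{(3)}$, which vanishes by the zero-mean condition imposed on the third basis set. If $w=\phi_{E,j}^{(2)}$, then (\ref{eq:local4}) gives $\nabla\cdot w=0$ on each $K$. If $w=\phi_{K,i}^{(3)}$ with $i\le m_K$, then (\ref{eq:local6}) rewrites $\int_K p_{K,j}^{(3)}\nabla\cdot w=\mu_{K,i}\int_K\rho\, p_{K,i}^{(3)} p_{K,j}^{(3)}$, which vanishes by the standard orthogonality of eigenfunctions of a self-adjoint mixed eigenproblem with distinct eigenvalues. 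The same three cases yield (\ref{eq:inter3}) after switching the roles of $v$ and $p$: against $q=1$ on $K$ I combine (\ref{eq:local4}) with the divergence theorem and the identity $\phi_{K,j}^{(3)}\in V_{h,0}(K)$, and against $q=p_{K,i}^{(3)}$ I use the same spectral orthogonality.

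For (\ref{eq:inter2}) and (\ref{eq:inter4}) the mechanism is parallel and relies on the edge spectral problem (\ref{eq:spectral}). The key observations are that $p_h^{(B)}-\pi(p_h^{(B)})$ restricted to $E\in\mathcal{E}_p^0$ is a linear combination of the traces $\phi_{E,j}^{(2)}\cdot n$ with $j>n_E$; that every third-set basis function $\phi_{K,j}^{(3)}$ has vanishing normal trace on $\partial K$ and therefore contributes no jump across any $E\in\mathcal{E}_p^0$; and that for each decoupled basis $w\in \widehat{V}_H^{(1)}+\widehat{V}_H^{(2)}$ the edge jump $[w\cdot n]_E$ is a linear combination of the constant $1$ (coming from $\phi_E^{(1)}$) and of the traces $\phi_{E,i}^{(2)}\cdot n$ with $i\le n_E$. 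The spectral orthogonality $\int_E (\phi_{E,j}^{(2)}\cdot n)(\phi_{E,i}^{(2)}\cdot n)=0$ for $i\ne j$ coming from (\ref{eq:spectral}), combined with the zero-mean property of $\phi_{E,j}^{(2)}\cdot n$ (because it lies in $\overline{D}_E$), forces the edge integral in (\ref{eq:inter2}) to vanish. Identity (\ref{eq:inter4}) then follows by the mirror argument with the roles of trial and test spaces interchanged.

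I expect the most delicate step to be the bookkeeping between the undecoupled prototypes $\phi_E^{(1)},\phi_{E,j}^{(2)}$ used in the definition (\ref{eq:inter}) of $\pi$ and the decoupled spaces $\widehat{V}_H^{(1)},\widehat{V}_H^{(2)}$ in which the test functions actually live. Identifying $[w\cdot n]_E$ with a specific linear combination of the undecoupled traces with the correct signs, and checking that the boundary edges $\mathcal{E}_p\cap\partial\Omega$ are treated consistently with the choice $\widehat{Q}_{H,0}^{(2)}=\sum_{E\in\mathcal{E}_p^0}Q_{E,\text{off}}^{(2)}$, is where most of the careful work sits; once those identifications are in place, each cancellation falls out of the spectral orthogonality and the zero-mean property of the snapshot pieces.
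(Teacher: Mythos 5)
Your proposal is correct and follows essentially the same route as the paper's proof: decompose each residual as the tail of the expansions (\ref{eq:f1})--(\ref{eq:f3}) beyond the cutoffs $m_K$, $n_E$, and then kill each pairing case by case using the constant divergence from (\ref{eq:local2}), the divergence-free property from (\ref{eq:local4}), the $\rho$-weighted eigenfunction orthogonality from (\ref{eq:local6}), the zero-mean property of $p_{K,j}^{(3)}$ and of $\phi_{E,j}^{(2)}\cdot n$, and the edge orthogonality from (\ref{eq:spectral}). You actually spell out (\ref{eq:inter3})--(\ref{eq:inter4}) and the decoupled-versus-undecoupled bookkeeping in slightly more detail than the paper, which simply declares those cases ``similar,'' but the underlying argument is identical.
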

\begin{proof}
By definition, we have
\begin{equation*}
p_h^{(I)}-\pi(p^{(I)}_h) = \sum_{K\in\mathcal{T}^H} \Big( \sum_{j=m_K+1}^{M_K} a_{K,j} p_{K,j}^{(3)} \Big).
\end{equation*}
Taking $q= p_{K,j}^{(3)}$ in (\ref{eq:local2}), we have
\begin{equation*}
 \int_{K} p_{K,j}^{(3)} \nabla \cdot \phi_E^{(1)}
= \int_{K} c_E \, p_{K,j}^{(3)} = 0
\end{equation*}
since $p_{K,j}^{(3)}$ has zero mean on $K$. We see that (\ref{eq:inter1}) holds when $w\in V_H^{(1)}$.
Now we take $q= p_{K,j}^{(3)}$ in (\ref{eq:local4}) to get
\begin{equation*}
 \int_{K} p_{K,j}^{(3)} \nabla \cdot \phi_{E,i}^{(2)}
 = 0, \quad\quad \forall \, i=1,2\cdots, n_E.
\end{equation*}
This shows that (\ref{eq:inter1}) holds when $w\in V_H^{(2)}$.
Finally, using $q= p_{K,j}^{(3)}$ in (\ref{eq:local6}), we have
\begin{equation*}
 \int_{K} p_{K,j}^{(3)} \nabla \cdot \phi_{E,i}^{(3)}
 = \int_K \rho \, r \, p_{K,j}^{(3)}, \quad\quad \forall \, i=1,2\cdots, m_K
\end{equation*}
where $r = \text{span}\{ p_{K,i}^{(3)} \, : \, i=1,2,\cdots, m_K \}$.
This shows that (\ref{eq:inter1}) holds when $w\in V_H^{(3)}$.

By definition, we have
\begin{equation*}
p_h^{(B)}-\pi(p_h^{(B)}) = \sum_{E\in\mathcal{E}_p^0} \Big(  \sum_{j=n_E+1}^{N_E} b_{E,j} p_{E,j}^{(2)} \Big).
\end{equation*}
It is clear that (\ref{eq:inter2}) holds when $w\in V_H^{(3)}$
since $w$ vanishes on all coarse edges.
It is also clear that (\ref{eq:inter2}) holds when $w\in V_H^{(1)}$
since $w\cdot n$ is a constant function on all coarse edges
and $p_{E,j}^{(2)}$ has zero average on all coarse edges.
By the spectral problem (\ref{eq:spectral}), $p_{E,j}^{(2)}$ is orthogonal to $w\cdot n$ on $E\in \mathcal{E}_p^0$ for all $w\in V_H^{(2)}$, so
equation (\ref{eq:inter2}) holds when $w\in V_H^{(2)}$.

By definition, we have
\begin{equation*}
v_h - \pi(v_h) = \sum_{E\in\mathcal{E}^H} \Big(  \sum_{j=n_E+1}^{N_E} c_{E,j} \phi_{E,j}^{(2)} \Big)
+ \sum_{K\in\mathcal{T}^H} \sum_{j=m_K+1}^{M_K} d_{K,j} \phi_{K,j}^{(3)}.
\end{equation*}
The proofs for (\ref{eq:inter3}) and (\ref{eq:inter4}) are similar to the proofs of (\ref{eq:inter1}) and (\ref{eq:inter2})
and can be obtained by the same techniques above.
\end{proof}

Now we prove the convergence.
Substracting (\ref{eq:main3})-(\ref{eq:main4}) from (\ref{eq:rt1})-(\ref{eq:rt2}),
\begin{eqnarray}
\int_{\Omega} \kappa \frac{\partial (v_h-v_H)}{\partial t} \cdot w - \int_{\Omega} (p_h^{(I)}-p^{(I)}_H) \nabla \cdot w
 + \sum_{E \in \mathcal{E}_p^0} \int_E (p_h^{(B)}-p_H^{(B)}) [ w \cdot n] &=& 0, \label{eq:error1} \\
\int_{\Omega} \rho \frac{\partial (p_h-p_H)}{\partial t} q + \int_{\Omega} q^{(I)} \nabla \cdot (v_h-v_H)  - \sum_{E\in  \mathcal{E}_p^0} \int_E q^{(B)} [ (v_h-v_H) \cdot n]
&=& 0, \label{eq:error2}
\end{eqnarray}
for all $w\in V_H$ and $q\in Q_H$.
Using Lemma \ref{lem:inter},
\begin{eqnarray}
\int_{\Omega} \kappa \frac{\partial (v_h-v_H)}{\partial t} \cdot w - \int_{\Omega} (\pi(p_h^{(I)})-p^{(I)}_H) \nabla \cdot w
 + \sum_{E \in \mathcal{E}_p^0} \int_E (\pi(p_h^{(B)})-p_H^{(B)}) [ w \cdot n] &=& 0, \label{eq:error3} \\
\int_{\Omega} \rho \frac{\partial (p_h-p_H)}{\partial t} q + \int_{\Omega} q^{(I)} \nabla \cdot (\pi(v_h)-v_H)  - \sum_{E\in  \mathcal{E}_p^0} \int_E q^{(B)} [ (\pi(v_h)-v_H) \cdot n]
&=& 0, \label{eq:error4}
\end{eqnarray}
for all $w\in V_H$ and $q\in Q_H$.
Taking $w = \pi(v_h)-v_H$ and $q = \pi(p_h^{(I)})-p^{(I)}_H + \pi(p_h^{(B)})-p_H^{(B)}$,
and adding the resulting equations, we have
\begin{equation}
\int_{\Omega} \kappa \frac{\partial (v_h-v_H)}{\partial t} \cdot (\pi(v_h)-v_H)
+ \int_{\Omega} \rho \frac{\partial (p_h-p_H)}{\partial t} (\pi(p_h^{(I)}) + \pi(p_h^{(B)})-p_H) = 0.
\end{equation}
Thus, we obtain
\begin{equation}
\begin{split}
&\: \max_{0\leq t\leq T} \Big( \| \pi(v_h)-v_H \|_V + \| \pi(p_h^{(I)}) + \pi(p_h^{(B)})-p_H\|_Q \Big) \\
\leq
&\: \int_0^T \Big( \| \pi(\dot{v}_h)-\dot{v}_h \|_V + \| \pi(\dot{p}_h^{(I)}) + \pi(\dot{p}_h^{(B)})-\dot{p}_h\|_Q \Big)
\end{split}
\label{eq:errorbound}
\end{equation}
where the dots denote time derivatives.
Hence, it suffices to estimate the interpolation errors.

In the following two theorems, we give error estimates of the interpolants.
We remark that the notation $\alpha \lesssim \beta$ means that $\alpha \leq c \beta$
for some constant $c>0$ independent of the mesh.
\begin{theorem}
\label{thm:inter1}
The interpolant $\pi(v_h)$ defined in (\ref{eq:inter}) satisfies
\begin{equation}
\| v_h - \pi(v_h)\|_V^2 \lesssim
\Big( \max_{E\in\mathcal{T}^H} \lambda_{E,n_E+1}^{-1} \Big) \sum_{E\in\mathcal{E}^H} \int_E (v_h\cdot n)^2
+
\Big( \max_{K\in\mathcal{T}^H} \mu_{K,m_K+1}^{-1} \Big) \| \rho^{-1} f - \dot{p}_h \|_Q^2.
\end{equation}
\end{theorem}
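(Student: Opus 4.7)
The plan is to split the interpolation error from (\ref{eq:inter}) into its ``edge'' and ``element'' contributions, $v_h-\pi(v_h) = e_{\text{edge}}+e_{\text{elem}}$, with $e_{\text{edge}} = \sum_{E\in\mathcal{E}^H}\sum_{j>n_E} c_{E,j}\phi_{E,j}^{(2)}$ and $e_{\text{elem}} = \sum_{K\in\mathcal{T}^H}\sum_{j>m_K} d_{K,j}\phi_{K,j}^{(3)}$, and then estimate each piece separately after applying the Young-type inequality $\|a+b\|_V^2\leq 2\|a\|_V^2+2\|b\|_V^2$.

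For the edge term I would normalize the offline eigenfunctions so that $\int_E(\phi_{E,j}^{(2)}\cdot n)^2 = 1$. The spectral problem (\ref{eq:spectral}) then supplies the biorthogonality $\int_{\omega_E}\kappa\,\phi_{E,j}^{(2)}\cdot\phi_{E,k}^{(2)} = \lambda_{E,j}^{-1}\delta_{jk}$, and the traces $\{\phi_{E,j}^{(2)}\cdot n\}$ form an $L^2(E)$-orthonormal basis of $\overline{D}_E$, so $\sum_{j}c_{E,j}^2\leq\int_E(v_h\cdot n)^2$ by Parseval. On the patch $\omega_E$ this yields $\|\sum_{j>n_E}c_{E,j}\phi_{E,j}^{(2)}\|_V^2 = \sum_{j>n_E}\lambda_{E,j}^{-1}c_{E,j}^2\leq\lambda_{E,n_E+1}^{-1}\int_E(v_h\cdot n)^2$. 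A finite-overlap argument (each coarse triangle has three edges, and each edge is shared by at most two triangles) then delivers $\|e_{\text{edge}}\|_V^2\lesssim(\max_E\lambda_{E,n_E+1}^{-1})\sum_{E\in\mathcal{E}^H}\int_E(v_h\cdot n)^2$.

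For the element term the bases $\phi_{K,j}^{(3)}$ have pairwise disjoint supports and, after normalizing $\|\phi_{K,j}^{(3)}\|_V=1$, satisfy $\int_K\rho(p_{K,j}^{(3)})^2 = \mu_{K,j}^{-1}$ as a consequence of (\ref{eq:local5})-(\ref{eq:local6}). The key identity to establish is $d_{K,j}=\int_K p_{K,j}^{(3)}\,\nabla\cdot v_h$. To see this, plugging $q=p_{K,j}^{(3)}$ into (\ref{eq:local2}) and (\ref{eq:local4}) and using the mean-zero property of $p_{K,j}^{(3)}$ kills the $\phi_E^{(1)}$ and $\phi_{E,i}^{(2)}$ contributions in the expansion (\ref{eq:f3}) of $\nabla\cdot v_h$, while (\ref{eq:local6}) combined with the $Q$-orthogonality of $\{p_{K,j}^{(3)}\}$ produces $\int_K p_{K,k}^{(3)}\,\nabla\cdot\phi_{K,j}^{(3)} = \delta_{jk}$. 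Substituting these into (\ref{eq:f3}) gives the identity. Testing the reference scheme (\ref{eq:rt2}) with $q=p_{K,j}^{(3)}\in Q_h$ (which is supported in $K$ and has zero mean, so no boundary term appears) then converts this into $d_{K,j} = \int_K(f-\rho\dot p_h)p_{K,j}^{(3)} = (\rho^{-1}f-\dot p_h,\; p_{K,j}^{(3)})_Q$.

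To finish, I would rescale $\tilde p_{K,j}^{(3)} = \mu_{K,j}^{1/2}p_{K,j}^{(3)}$ to be orthonormal in $(\cdot,\cdot)_Q$, so $d_{K,j}^2 = \mu_{K,j}^{-1}|(\rho^{-1}f-\dot p_h,\tilde p_{K,j}^{(3)})_Q|^2$. Bessel's inequality together with $\mu_{K,j}\geq\mu_{K,m_K+1}$ for $j>m_K$ gives $\sum_{j>m_K}d_{K,j}^2\leq\mu_{K,m_K+1}^{-1}\|\rho^{-1}f-\dot p_h\|_{Q,K}^2$, and summing over $K$ (disjoint supports) delivers the second term. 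The main obstacle is the bookkeeping in the third step: establishing the ``mixed orthogonality'' $\int_K p_{K,j}^{(3)}\,\nabla\cdot\phi = 0$ for every $\phi$ in the first two basis sets, since this is precisely what reduces the coefficient $d_{K,j}$ to an inner product against the pressure residual $\rho^{-1}f-\dot p_h$ by way of the reference mixed equation.
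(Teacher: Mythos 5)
Your proposal is correct and follows essentially the same route as the paper: the same splitting into edge and element parts, the same use of the spectral problems' biorthogonality for the edge term, and the same reduction of $d_{K,j}$ to an inner product against $\rho^{-1}f-\dot p_h$ via the reference equation (\ref{eq:rt2}) and the mean-zero/divergence orthogonalities. The only (harmless) differences are your choice of $V$-normalization for $\phi_{K,j}^{(3)}$ and your use of Bessel's inequality where the paper instead applies Cauchy--Schwarz and absorbs a factor of $\|z_2\|_V$.
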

\begin{proof}
By definition,
we have
\begin{equation*}
v_h - \pi(v_h) = \sum_{E\in\mathcal{E}^H} \Big(  \sum_{j=n_E+1}^{N_E} c_{E,j} \phi_{E,j}^{(2)} \Big)
+ \sum_{K\in\mathcal{T}^H} \sum_{j=m_K+1}^{M_K} d_{K,j} \phi_{K,j}^{(3)}
:= z_1 + z_2.
\end{equation*}
By the orthogonality of eigenfunctions of the spectral problem (\ref{eq:spectral}),
\begin{equation*}
\| z_1\|^2_V \lesssim
\sum_{E\in\mathcal{E}^H} \lambda_{E,n_E+1}^{-1} \int_E (v_h\cdot n)^2.
\end{equation*}

From (\ref{eq:rt2}), we have
\begin{equation}
\int_{\Omega} q \nabla \cdot v_h
= \int_{\Omega} f q - \int_{\Omega} \rho \frac{\partial p_h}{\partial t} q
\label{eq:bound1}
\end{equation}
for all $q\in Q_H^{(3)}$. Using the fact that $\int_K q = 0$ for all $q\in Q_H^{(3)}$, we have
\begin{equation*}
\int_{\Omega} q \nabla \cdot v_h
= \sum_{K\in\mathcal{T}^H} \sum_{j=1}^{M_K} d_{K,j} \int_K q \nabla \cdot \phi_{K,j}^{(3)}.
\end{equation*}
By the spectral problem (\ref{eq:local6}),
\begin{equation*}
\int_{\Omega} q \nabla \cdot v_h
= \sum_{K\in\mathcal{T}^H} \sum_{j=1}^{M_K} d_{K,j} \mu_{K,j} \int_K \rho \, p_{K,j}^{(3)} \, q.
\end{equation*}
Taking $q=p_{K,j}^{(3)}$, and using the condition that $\int_K \rho \, p_{K,i}^{(3)} \, p_{K,j}^{(3)} = \delta_{ij}$,
we see that
\begin{equation}
d_{K,j} \mu_{K,j} = \int_{\Omega} p_{K,j}^{(3)} \nabla \cdot v_h, \quad\quad j=1,2,\cdots,M_K, \quad K\in\mathcal{T}^H.
\label{eq:bound2}
\end{equation}
Thus, by the spectral problem (\ref{eq:local5})-(\ref{eq:local6}), we obtain
\begin{equation*}
\| z_2\|_V^2 \lesssim \sum_{K\in\mathcal{T}^H} \sum_{j=m_K+1}^{M_K} d_{K,j}^2 (\phi_{K,j}^{(3)},\phi_{K,j}^{(3)})_V
= \sum_{K\in\mathcal{T}^H} \sum_{j=m_K+1}^{M_K} d_{K,j}^2 \mu_{K,j}.
\end{equation*}
Using (\ref{eq:bound2}) and then (\ref{eq:bound1}),
\begin{equation*}
\| z_2 \|_V^2 \lesssim \sum_{K\in\mathcal{T}^H} \sum_{j=m_K+1}^{M_K} \int_{\Omega} d_{K,j} p_{K,j}^{(3)} \nabla \cdot v_h
= \int_{\Omega} \Big( f - \rho \frac{\partial p_h}{\partial t} \Big) \sum_{K\in\mathcal{T}^H} \sum_{j=m_K+1}^{M_K} d_{K,j} p_{K,j}^{(3)}.
\end{equation*}
Hence, we have
\begin{equation*}
\| z_2 \|_V^2 \lesssim \Big( \max_{K\in\mathcal{T}^H} \mu_{K,m_K+1}^{-1} \Big) \| \rho^{-1} f - \dot{p}_h \|_Q^2
\end{equation*}
since
\begin{equation*}
\| \sum_{K\in\mathcal{T}^H} \sum_{j=m_K+1}^{M_K} d_{K,j} p_{K,j}^{(3)} \|_Q^2
\leq \Big( \max_{K\in\mathcal{T}^H} \mu_{K,m_K+1}^{-1} \Big) \| z_2\|_V^2
\end{equation*}
by the spectral problem (\ref{eq:local5})-(\ref{eq:local6}).

This completes the proof of this theorem.
\end{proof}

\begin{theorem}
\label{thm:inter2}
The interpolants $\pi(p_h^{(I)})$ and $\pi(p_h^{(B)})$ defined in (\ref{eq:inter}) satisfy
\begin{eqnarray}
\| p_h^{(I)}-\pi(p^{(I)}_h) \|_Q^2 &\leq& \Big( \max_{K\in\mathcal{T}^H} \mu_{K,m_K+1}^{-1} \Big) \| \dot{v}_h \|_V^2, \label{eq:p1} \\
\| p_h^{(B)}-\pi(p^{(B)}_h) \|_Q^2 &\lesssim& h \Big(  \max_{E\in\mathcal{E}^0_p}{\lambda^{-1}_{E,n_E+1}}\Big) \| \dot{v}_h \|_V^2. \label{eq:p2}
\end{eqnarray}
\end{theorem}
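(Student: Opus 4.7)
The overall strategy for both bounds is duality: express each target squared norm as a sum of squared expansion coefficients, then bound that sum by testing the variational identity (\ref{eq:rt1}) against a judiciously chosen member of $V_H$.

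For (\ref{eq:p1}), the $Q$-orthonormality $(p_{K,i}^{(3)},p_{K,j}^{(3)})_Q = \delta_{ij}$ of the spectral basis gives
\[
\|p_h^{(I)} - \pi(p_h^{(I)})\|_Q^2 = \sum_{K\in\mathcal{T}^H}\sum_{j=m_K+1}^{M_K} a_{K,j}^2.
\]
First I test (\ref{eq:rt1}) with the localized field $w=\phi_{K,j}^{(3)}$. Since $\phi_{K,j}^{(3)}\in V_{h,0}(K)$ has vanishing normal trace on $\partial K$, the edge-jump term drops, and the divergence--pressure term collapses to $a_{K,j}\mu_{K,j}$ by the spectral identity (\ref{eq:local6}) combined with the zero-mean property of $p_{K,j}^{(3)}$. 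This produces the identity $a_{K,j}\mu_{K,j}=(\dot v_h,\phi_{K,j}^{(3)})_V$. I then plug the specific test $w^\star=\sum_{K,\,j>m_K} a_{K,j}\mu_{K,j}^{-1}\phi_{K,j}^{(3)}\in V_H^{(3)}$ into the same identity: the left side equals $\sum a_{K,j}^2$, while $(\phi_{K,i}^{(3)},\phi_{K,j}^{(3)})_V=\mu_{K,j}\delta_{ij}$ (a consequence of (\ref{eq:local5})--(\ref{eq:local6})) yields $\|w^\star\|_V^2=\sum a_{K,j}^2\mu_{K,j}^{-1}\leq(\max_K\mu_{K,m_K+1}^{-1})\sum a_{K,j}^2$. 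Cauchy--Schwarz then closes the bound.

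For (\ref{eq:p2}), an extra factor $h$ enters because $p_{E,j}^{(2)}$ lives on a thin layer $\mathcal{K}_E^h$ of fine triangles around $E$. On each $\tau\in\mathcal{K}_E^h$, $p_{E,j}^{(2)}$ is a $P^1$ function with zero mean on $\tau$ and constant trace equal to $\phi_{E,j}^{(2)}\cdot n$ on the edge $e\subset E$ (the value at the opposite vertex being forced by zero mean), so a reference-element argument gives $\|p_{E,j}^{(2)}\|_{L^2(\tau)}^2 \sim h\,\|\phi_{E,j}^{(2)}\cdot n\|_{L^2(e)}^2$. Summing over $\tau$ and using the $L^2(E)$-orthonormality of $\{\phi_{E,j}^{(2)}\cdot n\}$ (the natural normalization making $b_{E,j}=\int_E p_h^{(B)}\phi_{E,j}^{(2)}\cdot n$ consistent with the expansion (\ref{eq:f2})) produces
\[
\|p_h^{(B)}-\pi(p_h^{(B)})\|_Q^2 \lesssim h\sum_{E\in\mathcal{E}_p^0}\sum_{j>n_E} b_{E,j}^2.
\]
To control the inner sum I test (\ref{eq:rt1}) with the one-sided decoupled version of $\phi_{E,j}^{(2)}$, supported on a single coarse element $K_+$ adjoining $E$ and lying in $\widehat V_H^{(2)}$. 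The divergence term vanishes by (\ref{eq:local4}) and the jump across $E$ contributes exactly $\pm b_{E,j}$, giving $b_{E,j}=\pm(\dot v_h,\widehat\phi_{E,j}^{(2,+)})_V$. Testing then with $w=\sum_{E,\,j>n_E} b_{E,j}\widehat\phi_{E,j}^{(2,+)}\in V_H$ produces $(\dot v_h,w)_V=\pm\sum b_{E,j}^2$, while $\|\widehat\phi_{E,j}^{(2,+)}\|_V^2\leq\|\phi_{E,j}^{(2)}\|_V^2=\lambda_{E,j}^{-1}$ combined with the $V$-orthogonality of $\{\phi_{E,j}^{(2)}\}_j$ (from (\ref{eq:spectral})) yields $\|w\|_V^2\leq(\max_E\lambda_{E,n_E+1}^{-1})\sum b_{E,j}^2$. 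A final Cauchy--Schwarz closes the estimate.

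The main obstacles are bookkeeping rather than conceptual: one must carefully set up the one-sided decoupled basis function on each interior coarse edge in $\mathcal{E}_p^0$ and verify that it is an admissible test in (\ref{eq:rt1}), and one must rigorously justify the $h$-scaling $\|p_{E,j}^{(2)}\|_{L^2(\tau)}^2\sim h\|\phi_{E,j}^{(2)}\cdot n\|_{L^2(e)}^2$ from the $P^1$, zero-mean, constant-trace structure (appealing to boundedness of $\rho$ to pass between the $Q$ and $L^2$ norms).
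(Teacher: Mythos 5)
Your proof is correct and follows essentially the same duality argument as the paper: for each bound you test the fine-scale identity (\ref{eq:rt1}) with the same auxiliary field (the weighted tail sum $\sum a_{K,j}\mu_{K,j}^{-1}\phi_{K,j}^{(3)}$, resp.\ the one-sided decoupled $\sum b_{E,j}\widehat{\phi}_{E,j}^{(2,+)}$) and use the same spectral orthogonality relations and the same $h$-scaling coming from the thin support $\mathcal{K}_E^h$ of $p_{E,j}^{(2)}$. The only slip is labeling these test fields as members of $V_H$ (resp.\ $V_H^{(3)}$) --- since they are built from the discarded modes $j>n_E$ and $j>m_K$ they do not lie in the offline spaces --- but this is immaterial because (\ref{eq:rt1}) holds for all of $\widehat{V}_h$, which is where they actually live.
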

\begin{proof}
By the definition of $\pi(p_h^{(I)})$, we have
\begin{equation*}
p_h^{(I)}-\pi(p^{(I)}_h) = \sum_{K\in\mathcal{T}^H} \Big( \sum_{j=m_K+1}^{M_K} a_{K,j} p_{K,j}^{(3)} \Big).
\end{equation*}
By the orthogonality of eigenfunctions $\{ p_{K,j}^{(3)} \}$ and the spectral problem
(\ref{eq:local6}), we have
\begin{equation*}
\| p_h^{(I)}-\pi(p^{(I)}_h) \|_Q^2 = (p_h^{(I)}-\pi(p^{(I)}_h), p_h^{(I)})_Q =
\int_{\Omega} p_h^{(I)} \nabla \cdot \Big( \sum_{K\in\mathcal{T}^H} \sum_{j=m_K+1}^{M_K} a_{K,j} \mu_{K,j}^{-1} \phi_{K,j}^{(3)} \Big).
\end{equation*}
We write $\displaystyle r_1 = \sum_{K\in\mathcal{T}^H} \sum_{j=m_K+1}^{M_K} a_{K,j} \mu_{K,j}^{-1} \phi_{K,j}^{(3)}$.
Using (\ref{eq:rt1}), we have
\begin{equation*}
\int_{\Omega} p_h^{(I)} \nabla \cdot r_1
= \int_{\Omega} \kappa \frac{\partial v_h}{\partial t} \cdot r_1
\leq \| \dot{v}_h \|_V \, \| r_1\|_V.
\end{equation*}
By the orthogonality of eigenfunctions $\{ \phi_{K,j}^{(3)} \}$,
\begin{equation*}
\|r_1\|_V^2 = \sum_{K\in\mathcal{T}^H} \sum_{j=m_K+1}^{M_K} (a_{K,j} \mu_{K,j}^{-1})^2 (\phi_{K,j}^{(3)},\phi_{K,j}^{(3)})_V
= \sum_{K\in\mathcal{T}^H} \sum_{j=m_K+1}^{M_K} (a_{K,j})^2 \mu_{K,j}^{-1} (p_{K,j}^{(3)},p_{K,j}^{(3)})_Q
\end{equation*}
which implies
\begin{equation*}
\|r_1\|_V^2 \leq \Big( \max_{K\in\mathcal{T}^H} \mu_{K,m_K+1}^{-1} \Big) \| p_h^{(I)}-\pi(p^{(I)}_h) \|_Q^2.
\end{equation*}
This proves (\ref{eq:p1}).

By the definition of $\pi(p_h^{(B)})$, we have
\begin{equation*}
p_h^{(B)}-\pi(p_h^{(B)}) = \sum_{E\in\mathcal{E}_p^0} \Big(  \sum_{j=n_E+1}^{N_E} b_{E,j} p_{E,j}^{(2)} \Big).
\end{equation*}
Recall that the basis functions $\{ p_{E,j}^{(2)} \}$ have supports in $\mathcal{K}^h_E$.
%and that $p_{E,j}^{(2)} = \phi_{E,j}^{(2)} \cdot n$ on the coarse edge $E \in\mathcal{E}_p^0$.
Thus, we have
\begin{equation*}
\| p_h^{(B)}-\pi(p_h^{(B)}) \|_Q^2 \lesssim h \| p_h^{(B)}-\pi(p_h^{(B)}) \|_{L^2(\mathcal{E}_p)}^2
\end{equation*}
where $\|\cdot\|_{L^2(\mathcal{E}_p)}$ denotes the $L^2$-norm defined on the set of coarse edges $\mathcal{E}_p$.
Next, we define $w\in \widehat{V}^{(2)}_{\text{snap}}$ such that
\begin{equation}
\label{eq:w}
[w\cdot n]|_{E}=-(p_h^{(B)}-\pi(p_h^{(B)}))|_{E}= \sum_{j=n_E+1}^{N_E} -b_{E,j} p_{E,j}^{(2)}
\end{equation}
for all $E \in \mathcal{E}^0_p$, where $\widehat{V}^{(2)}_{\text{snap}}$ is the union of all $\widehat{V}^{(2)}_{E, \text{snap}}$,
which is obtained by decoupling the continuity of normal components of basis functions in $V^{(2)}_{E, \text{snap}}$
on the edges in $\mathcal{E}_p^0$.
The condition (\ref{eq:w}) can be easily obtained by defining $w$ as
\begin{equation*}
w = \sum_{j=n_E+1}^{N_E} -b_{E,j} \phi_{E,j}^{(2)}
\end{equation*}
on one of the two coarse elements, denoted by $K_E$, having the edge $E$ and as zero on the other coarse element, for every $E \in \mathcal{E}_p^0$.
Using \eqref{eq:local4} and \eqref{eq:rt1}, we get
\begin{equation}
\label{eq:w1}
 \sum_{E\in\mathcal{E}_p^0} \int_E p^{(B)}_h\sum_{j=n_E+1}^{N_E} b_{E,j} p_{E,j}^{(2)} = \int_{\Omega} \kappa \frac{\partial v_h}{\partial t} \cdot w.
\end{equation}
In addition, by the orthogonality of eigenfunctions, it is easy to see that
\begin{equation}
\label{eq:w2}
 \| p_h^{(B)}-\pi(p_h^{(B)}) \|_{L^2(\mathcal{E}_p)}^2 = \sum_{E\in\mathcal{E}_p^0}  b_{E,j}^2 \sum_{j=n_E+1}^{N_E} \int_E (p_{E,j}^{(2)})^2 = \sum_{E\in\mathcal{E}_p^0} \int_E p^{(B)}_h\sum_{j=n_E+1}^{N_E} b_{E,j} p_{E,j}^{(2)}.
\end{equation}
On the other hand, by the Cauchy-Schwarz ineqaulity, we have
\[
 \int_{\Omega} \kappa \frac{\partial v_h}{\partial t} \cdot w \lesssim  \|\frac{\partial v_h}{\partial t}\|_V \, \| w\|_V
\]
and by the definition of $w$, we have
\begin{equation*}
\| w\|_V^2 \leq \sum_{E\in\mathcal{E}_p^0} \sum_{j=n_E+1}^{N_E} b_{E,j}^2 \int_{K_E} \kappa \phi_{E,j}^2
\leq \sum_{E\in\mathcal{E}_p^0} \sum_{j=n_E+1}^{N_E} b_{E,j}^2 \int_{\omega_E} \kappa \phi_{E,j}^2.
\end{equation*}
By the spectral problem \eqref{eq:spectral},
\begin{equation}
\label{eq:w3}
\int_{\omega_E} \kappa \phi_{E,j}^2 = \cfrac{1}{\lambda_{E,j}} \int_E (\phi_{E,j}\cdot n)^2 = \cfrac{1}{\lambda_{E,j}}\int_E (p_{E,j}^{(2)})^2.
\end{equation}
Combining \eqref{eq:w}, \eqref{eq:w1}, \eqref{eq:w2} and \eqref{eq:w3}, we obtain
\[
\| p_h^{(B)}-\pi(p_h^{(B)}) \|_{L^2(\mathcal{E}_p)}^2 \lesssim \Big( \max_{E\in\mathcal{E}^0_p}{\lambda^{-1}_{E,n_E+1}} \Big) \|\frac{\partial v_h}{\partial t}\|^2_V
\]
This completes the proof.
\end{proof}

Finally, we prove the following convergence theorem.
\begin{theorem}
Let $(v_H,p_H)$ be the solution of the mixed GMsFEM (\ref{eq:main3})-(\ref{eq:main4}),
and let $\pi(v_h)$, $\pi(p_h^{(I)})$ and $\pi(p_h^{(B)})$
be the interpolants defined in (\ref{eq:inter}).
We have
\begin{equation}
\begin{split}
&\: \max_{0\leq t\leq T} \Big( \| \pi(v_h)-v_H \|^2_V + \| \pi(p_h^{(I)}) + \pi(p_h^{(B)})-p_H\|^2_Q \Big) \\
\lesssim
&\:\Big( \max_{E\in\mathcal{T}^H} \lambda_{E,n_E+1}^{-1} \Big)  \sum_{E\in\mathcal{E}^H} \int_0^T \int_E (\dot{v}_h\cdot n)^2
 + h\Big( \max_{E\in\mathcal{T}^H} \lambda_{E,n_E+1}^{-1} \Big)  \sum_{E\in\mathcal{E}^H} \int_0^T \int_E (\ddot{v}_h\cdot n)^2 \\
 &\: +\Big( \max_{K\in\mathcal{T}^H} \mu_{K,m_K+1}^{-1} \Big) \int_0^T \Big( \| \rho^{-1} \dot{f} - \ddot{p}_h \|_Q^2  + \| \ddot{v}_h \|_V^2 \Big).
\end{split}
\end{equation}
\end{theorem}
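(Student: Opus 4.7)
The plan is to combine the a priori estimate (\ref{eq:errorbound}) with the interpolation bounds of Theorems \ref{thm:inter1} and \ref{thm:inter2}, applied to the time derivatives of $v_h$ and $p_h$. The key structural fact is that all multiscale basis functions are time-independent, so the interpolant $\pi$ commutes with $\partial_t$; hence $\pi(\dot v_h)=\partial_t\pi(v_h)$, $\pi(\dot p_h^{(I)})=\partial_t\pi(p_h^{(I)})$, and $\pi(\dot p_h^{(B)})=\partial_t\pi(p_h^{(B)})$. Moreover, differentiating (\ref{eq:rt1})-(\ref{eq:rt2}) in time shows that the pair $(\dot v_h,\dot p_h)$ satisfies the same mixed system with source $\dot f$, so Theorems \ref{thm:inter1} and \ref{thm:inter2} can be applied verbatim with $(v_h,p_h,f)$ replaced by $(\dot v_h,\dot p_h,\dot f)$.

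With this in hand I would proceed in two steps. First, square (\ref{eq:errorbound}), apply $(a+b)^2\leq 2(a^2+b^2)$ on the left and the Cauchy--Schwarz inequality in time on the right to obtain
\begin{equation*}
\max_{0\leq t\leq T}\Big(\|\pi(v_h)-v_H\|_V^2+\|\pi(p_h^{(I)})+\pi(p_h^{(B)})-p_H\|_Q^2\Big)\lesssim\int_0^T\Big(\|\dot v_h-\pi(\dot v_h)\|_V^2+\|\dot p_h^{(I)}-\pi(\dot p_h^{(I)})\|_Q^2+\|\dot p_h^{(B)}-\pi(\dot p_h^{(B)})\|_Q^2\Big)\,dt,
\end{equation*}
where the constant absorbs the horizon $T$. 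Second, substitute the pointwise-in-time bounds from Theorem \ref{thm:inter1} applied to $\dot v_h$, which controls the first integrand by the sum of $(\max_E\lambda_{E,n_E+1}^{-1})\sum_{E\in\mathcal{E}^H}\int_E(\dot v_h\cdot n)^2$ and $(\max_K\mu_{K,m_K+1}^{-1})\|\rho^{-1}\dot f-\ddot p_h\|_Q^2$, together with those from Theorem \ref{thm:inter2} applied to $\dot p_h$, which control the two pressure integrands by $(\max_K\mu_{K,m_K+1}^{-1})\|\ddot v_h\|_V^2$ and $h\,(\max_E\lambda_{E,n_E+1}^{-1})\|\ddot v_h\|_V^2$ respectively. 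Integrating in time over $[0,T]$ and collecting like terms produces the contributions on the right-hand side of the theorem.

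The main obstacle will be recovering the precise form $h\,(\max_E\lambda_{E,n_E+1}^{-1})\sum_{E\in\mathcal{E}^H}\int_0^T\int_E(\ddot v_h\cdot n)^2$ of the boundary-pressure contribution, which is sharper than the estimate $h\,(\max_E\lambda^{-1}_{E,n_E+1})\int_0^T\|\ddot v_h\|_V^2$ that falls immediately out of Theorem \ref{thm:inter2} applied to $\dot p_h$. To obtain this refinement I would revisit the duality argument at the end of the proof of Theorem \ref{thm:inter2}, where the auxiliary test function $w$ lies in $\widehat{V}^{(2)}_{\text{snap}}$, is supported on a single coarse element $K_E$ adjacent to $E$, and has prescribed normal trace $w\cdot n=-\sum_j b_{E,j}p_{E,j}^{(2)}$ on $E$. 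Rather than estimating $\int_{K_E}\kappa\,\ddot v_h\cdot w$ by the crude bound $\|\ddot v_h\|_V\|w\|_V$, I would exploit the mixed local problem (\ref{eq:local3})-(\ref{eq:local4}) defining each $\phi_{E,j}^{(2)}$ together with the spectral identity (\ref{eq:w3}) to re-express this volume pairing in terms of the edge quantity $\ddot v_h\cdot n$ on $E$ alone. Once this refined interpolation bound is in place, time integration of the three estimates and a triangle-inequality reassembly yield the theorem.
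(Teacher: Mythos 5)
Your proposal is correct and takes essentially the same route as the paper, whose proof of this theorem is a one-line combination of the a priori bound (\ref{eq:errorbound}) (squared, with Cauchy--Schwarz in time) and Theorems \ref{thm:inter1} and \ref{thm:inter2} applied to $(\dot v_h,\dot p_h,\dot f)$, using exactly the two structural facts you state: $\pi$ commutes with $\partial_t$ and the time-differentiated fine-scale system (\ref{eq:rt1})--(\ref{eq:rt2}) has source $\dot f$. The ``obstacle'' you flag in the second term is a genuine mismatch in the paper's own statement rather than a gap in your argument: the paper's proof likewise only produces $h\,(\max_E\lambda_{E,n_E+1}^{-1})\int_0^T\|\ddot v_h\|_V^2$ from (\ref{eq:p2}) and supplies no refinement to the edge-based form $h\,(\max_E\lambda_{E,n_E+1}^{-1})\sum_E\int_0^T\int_E(\ddot v_h\cdot n)^2$, so your direct substitution is exactly what the authors do.
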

\begin{proof}
The required bound is the consequence of (\ref{eq:errorbound}),
Theorem \ref{thm:inter1} and Theorem \ref{thm:inter2}.
\end{proof}

We remark that upper bounds for $\|v_h - v_H\|_V$ and $\| p_h - p_H \|_Q$
follow easily from the above theorem.

\section{Numerical Results}
\label{sec:num}

In this section, we present some numerical results
to show the performance of our method.
In our simulations, the computational domain $\Omega = [0,1]^2$, $\rho=1$ and
the source term $f$ is chosen as the Ricker wavelet
\[
f(x,t)= g(x) (t-2/f_{0})e^{-\pi^{2}f_{0}^{2}(t-2/f_{0}){}^{2}}, \quad g(x) = \delta^{-2} e^{((x-0.5)^{2}+(y-0.5)^{2})/\delta^{2}}
\]
where $f_0$ is the central frequency
and $\delta = 2h$ measures the size of the support of the source.
We assume that the initial conditions are zero.
In the first two examples, we will consider
the performance of our mixed GMsFEM
for the propagation of the above point source with two types of heterogeneities and
with the homogeneous Dirichlet boundary condition.
In our third example, we will apply
the perfectly matched layer (PML) \cite{berenger1994perfectly} to simulate
wave propagation in an unbounded domain containing a heterogeneous medium.

\subsection{A layered stochastic coefficient}
\label{heterogeneous case}
In our first example, we consider $\kappa^{-1}$ as a layered stochastic medium shown in Figure \ref{fig:m1}.
We will first construct the initial mesh $\mathcal{T}^H_0$ for the domain $\Omega=[0,1]\times[0,1]$.
To do so, we first define a
uniform triangular coarse mesh on $\Omega$ with mesh size $H=1/8$.
This triangular mesh is obtained by first dividing the domain into uniform squares
and then dividing each square into two triangles using the diagonal.
This process gives the initial mesh $\mathcal{T}^H_0$.
The required coarse mesh $\mathcal{T}^H$ is then obtained by the process described in Section \ref{sec:problem}.
Next, to construct the fine mesh $\mathcal{T}^h$,
each coarse triangular element in $\mathcal{T}^H$ is sub-divided into a union of uniform triangular fine mesh blocks with mesh size $h=1/64$
in the standard way.
Notice that, each coarse triangular element in $\mathcal{T}^H$ is divided into $64$ fine triangles.
We choose the source frequency $f_0=20$, and compute the solution at the time $T=0.2$.
The reference solution at time $T=0.2$ is shown in Figure \ref{fig:sol1}.
We apply our mixed GMsFEM (\ref{eq:main3})-(\ref{eq:main4}) for this problem with various choices of number of basis functions.
We will call the basis functions resulting from the first and the second basis sets the {\it boundary basis}
and the basis functions resulting from the third basis set the {\it interior basis}.
In Table \ref{tab:convergence}, we present the relative errors of the pressure $p$ in terms of the $Q$ norm
for various choices of number of basis functions.
In particular, we use $3$ to $6$ basis functions per coarse edge
for boundary basis
and use $4$ to $16$ basis functions per coarse element for interior basis.
We observe excellent performance of our method.
For reference, the fine grid solver has $24576$ unknowns for the pressure $p$
and $28928$ unknowns for the velocity $v$.
From Table \ref{tab:convergence}, we see that a very small dimensional approximation space
can give an error below $5\%$.
For instance, with the use of $4$ boundary basis functions per coarse edge
and the use of $12$ interior basis functions per coarse element,
the dimensions of the spaces $V_H$ and $Q_H$
are $7680$ and $5312$ respectively.
We remark that we observe a similar accuracy behavior for the velocity,
and we therefore skip those results in the paper.

%In addition, the convergence history of the method is presented in Table \ref{tab:convergence}.

\begin{figure}[ht]
\centering
\includegraphics[scale=0.5]{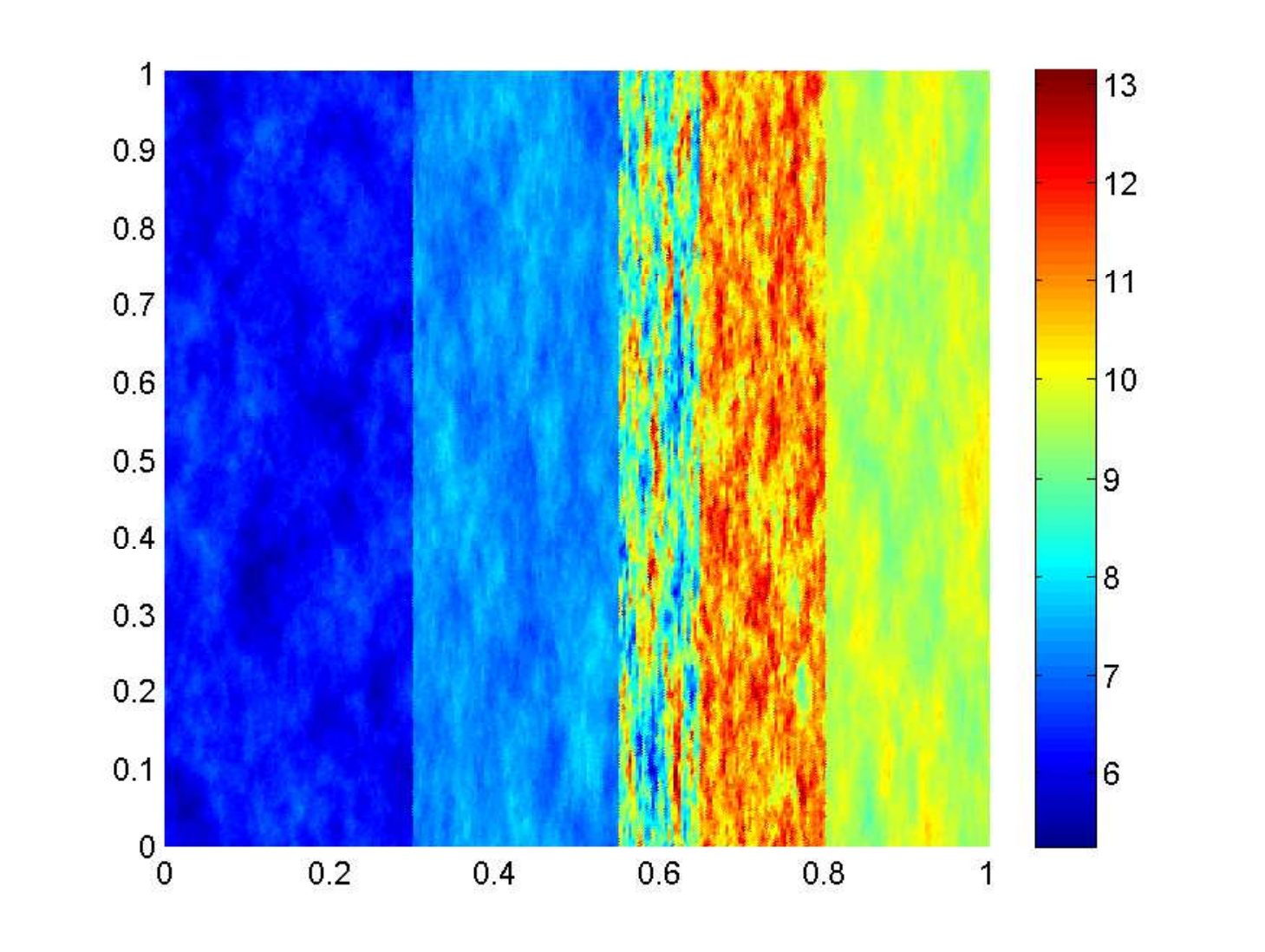}
\protect\caption{A stochastic coefficient for the first example.}
\label{fig:m1}
\end{figure}

\begin{figure}[ht]
\centering
\includegraphics[scale=0.5]{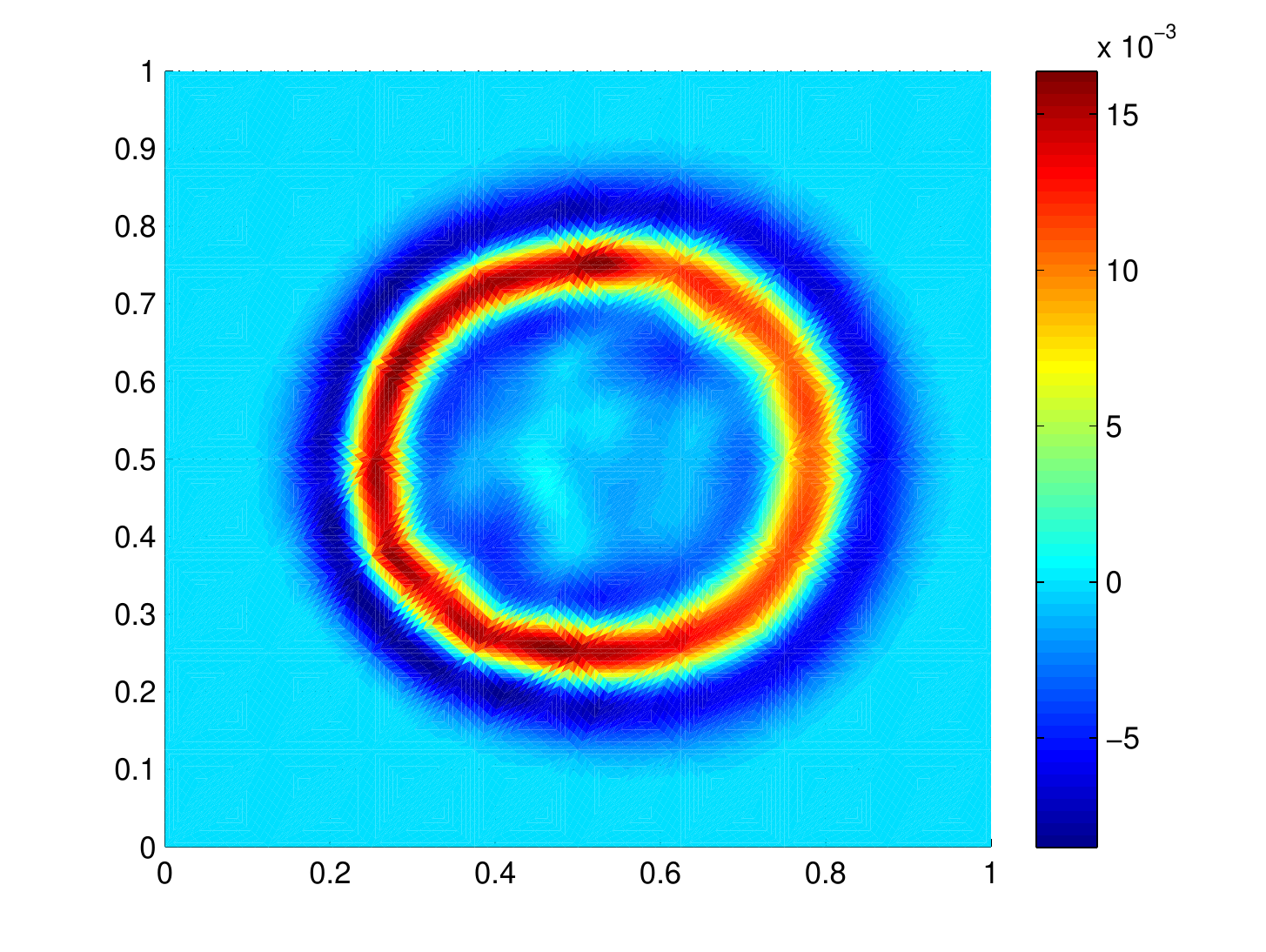}

\protect\caption{The reference solution for the first example.}
\label{fig:sol1}
\end{figure}

\begin{table}[ht]
\centering
\begin{tabular}{|c|c|c|c|c|}
\hline
\# boundary basis\textbackslash{}\# interior basis & 4 & 8 & 12 & 16\tabularnewline
\hline
3 & 13.22\% & 8.75\% & 7.84\% & 7.63\%\tabularnewline
\hline
4 & 12.76\% & 5.76\% & 3.65\% & 2.95\%\tabularnewline
\hline
5 & 12.97\% & 5.64\% & 3.31\% & 2.47\%\tabularnewline
\hline
6 & 13.01\% & 5.65\% & 3.31\% & 2.46\%\tabularnewline
\hline
\end{tabular}
\caption{Convergence history for various choices of number of basis functions for the first example.}
\label{tab:convergence}
\end{table}

\subsection{The Marmousi model}
\label{Marmousi case}

In our second example, we consider $\kappa^{-1}$ as a part of the Marmousi model shown in Figure \ref{fig:mar}. We assume that the domain $\Omega=[0,1]\times[0,1]$ is partitioned as the first example with coarse mesh size $H=1/16$ and the fine mesh size $h=1/256$.
We take the source frequency $f_0=20$, and compute the solution at the time $T=0.2$.
The reference solution and
the multiscale solution using
$6$ boundary basis functions per coarse edge and $12$ interior basis functions per coarse element
at the time $T=0.2$ are shown in Figure \ref{fig:sol2},
and we observe very good agreement.
%We also present a snapshot of the multiscale solution in Figure \ref{fig:sol2}.
%In Figure \ref{fig:eig}, we present the eigenvalue growth
%for both the spectral problems in the second (see equation (\ref{eq:spectral}))
%and the third basis sets (see the system (\ref{eq:local5})-(\ref{eq:local6})).
In Table \ref{tab:convergence1}, we present the relative errors of the pressure $p$
in terms of the $Q$ norm
for various choices of number of basis functions.
In particular, we use $3$ to $6$ basis functions per coarse edge
for boundary basis
and use $4$ to $12$ basis functions per coarse element for interior basis.
We again observe excellent performance of our method.
For reference, the fine grid solver has $98304$ unknowns for the pressure $p$
and $135296$ unknowns for the velocity $v$.
From Table \ref{tab:convergence}, we see that with the use of $6$ boundary basis functions per coarse edge
and the use of $12$ interior basis functions per coarse element, the relative error for the pressure is $8.59\%$, and
the dimensions of the spaces $V_H$ and $Q_H$
are $36864$ and $22848$ respectively.

\begin{figure}[ht]
\centering
\includegraphics[scale=0.5]{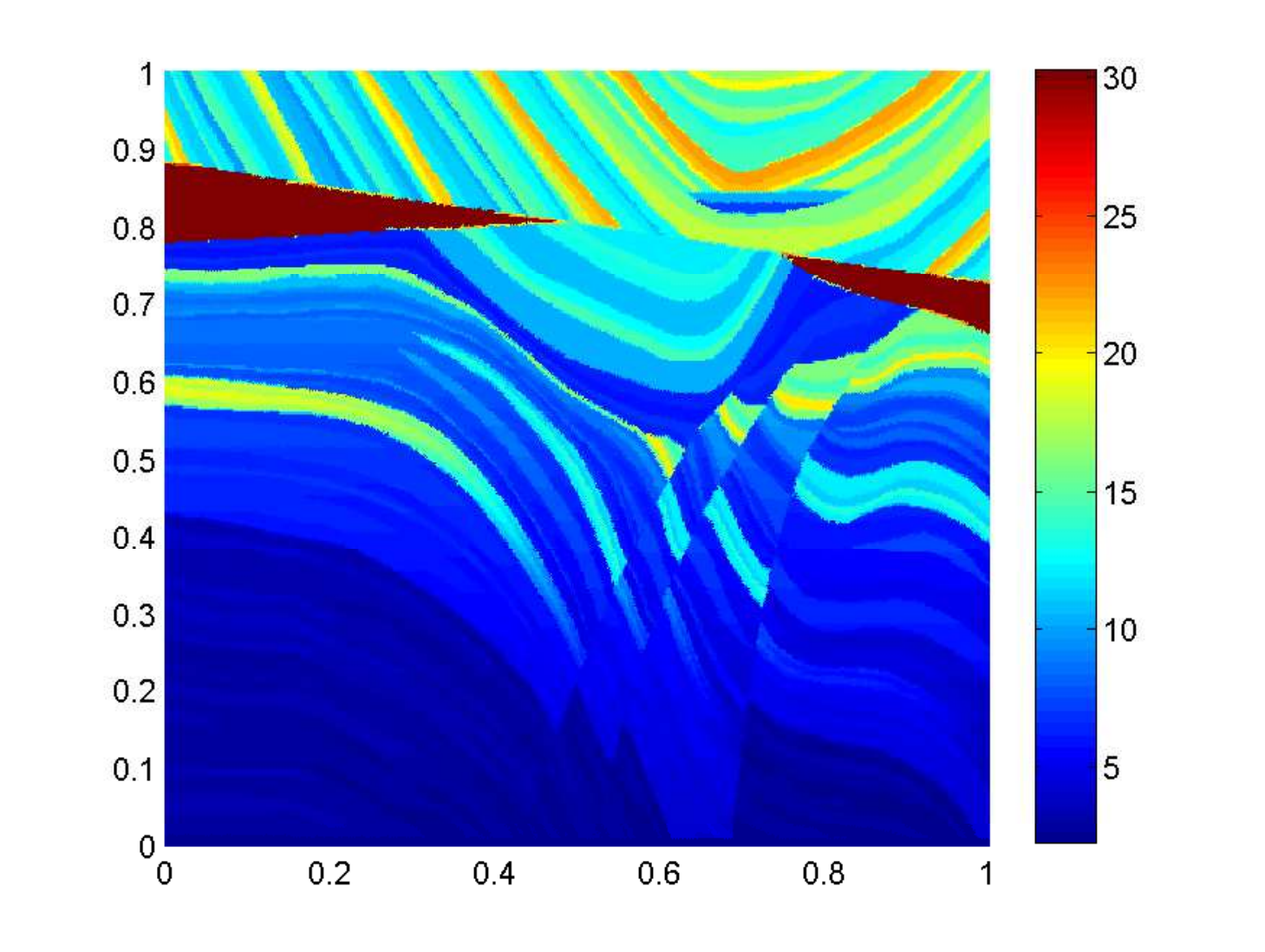}

\protect\caption{The Marmousi model for the second example.}
\label{fig:mar}
\end{figure}

\begin{figure}[ht]
\centering
\includegraphics[scale=0.5]{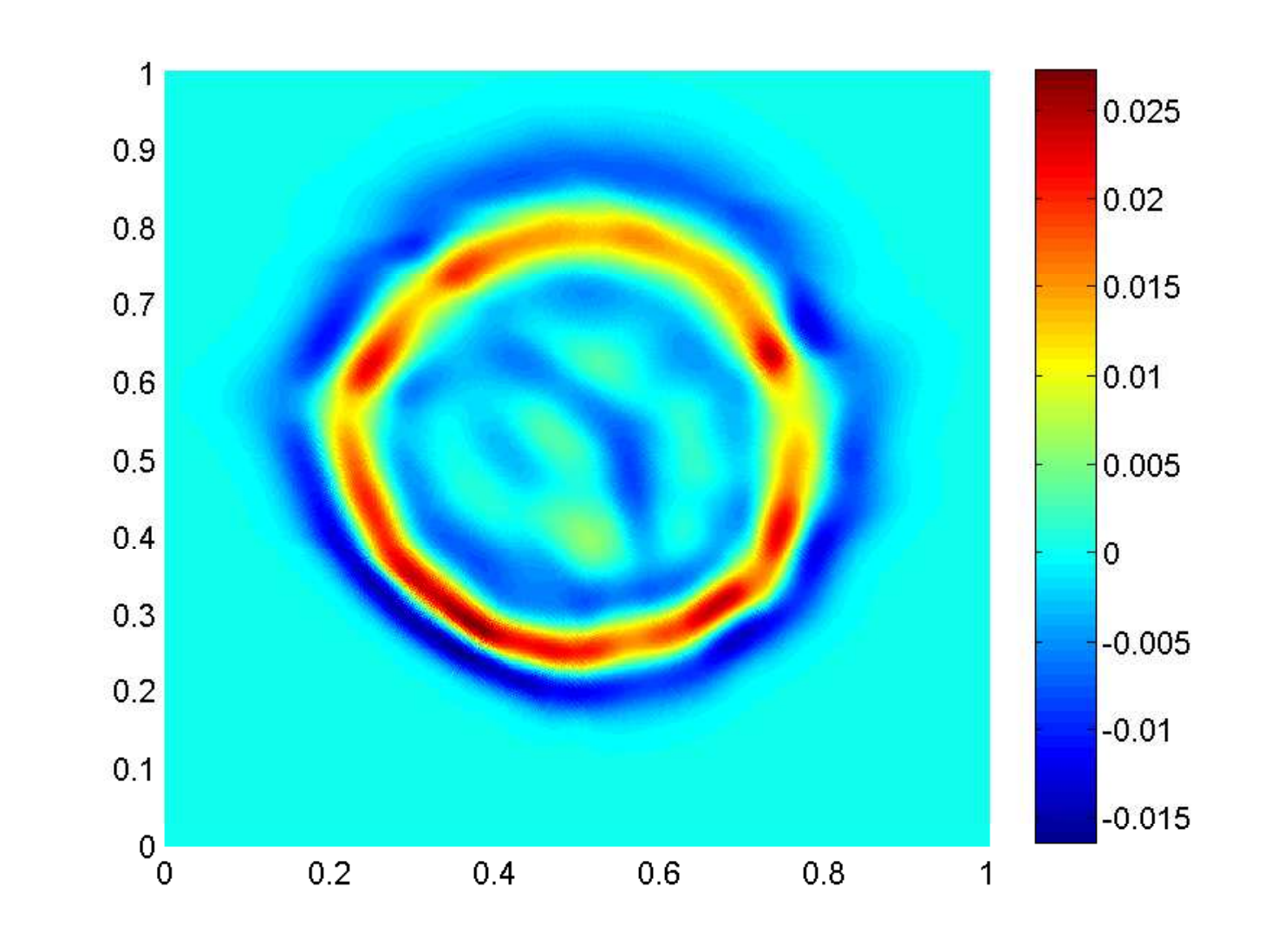} \includegraphics[scale=0.5]{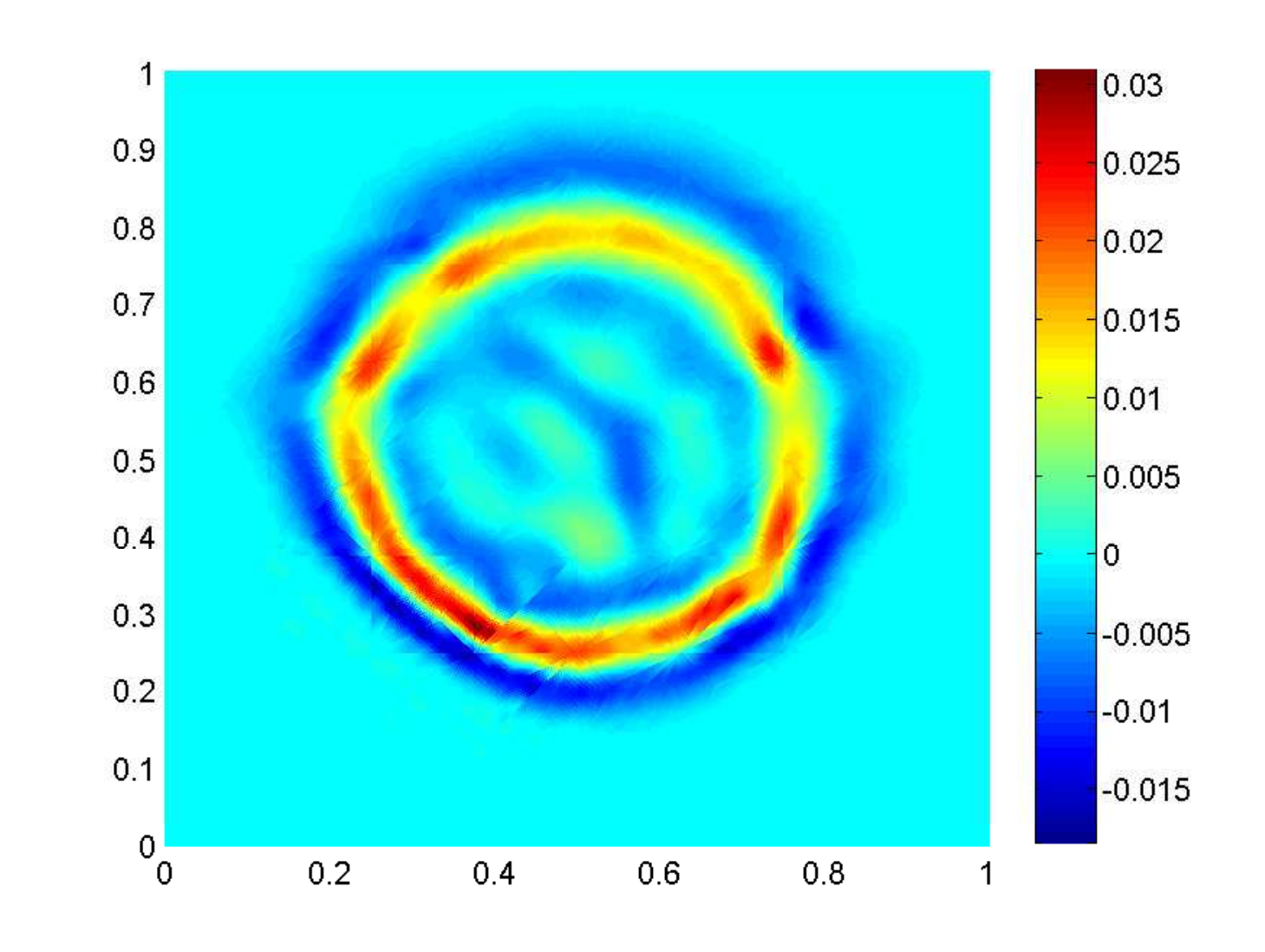}

\protect\caption{Left: The reference solution for the second example with $f_0=20$. Right: The corresponding multiscale solution for the second example using
$6$ boundary basis functions per coarse edge and $12$ interior basis functions per coarse element.}
\label{fig:sol2}
\end{figure}

%\begin{figure}[ht]
%\centering
%\includegraphics[scale=0.5]{figure/boundary_eigenvalue} \includegraphics[scale=0.5]{figure/interior_eigenvalue}

%\protect\caption{Left: Eigenvalue growth for the spectral problem in the second basis set. Right: Eigenvalue growth for the spectral problem in the third basis set.}
%\label{fig:eig}
%\end{figure}

%In addition, the convergence history of the method is presented in Table \ref{tab:convergence1}.

\begin{table}[ht]
\centering
\begin{tabular}{|c|c|c|c|}
\hline
\# boundary basis\textbackslash{}\#interior basis & 4 & 8 & 12\tabularnewline
\hline
3 & 46.66\% & 45.64\% & 46.02\%\tabularnewline
\hline
4 & 38.11\% & 23.83\% & 23.39\%\tabularnewline
\hline
5 & 40.35\% & 13.66\% & 10.84\%\tabularnewline
\hline
6 & 41.58\% & 12.91\% & 8.59\%\tabularnewline
\hline
\end{tabular}
\caption{Convergence history for various choices of number of basis functions for the second example with $f_0=20$.}
\label{tab:convergence1}
\end{table}

To further test the performance of our method, we take a higher frequency source term
with $f_0=50$ and compute the solution at $T=0.16$.
The reference solution and the corresponding multiscale solution with $8$ boundary basis functions per coarse edge
and $20$ interior basis functions per coarse element
are shown in Figure \ref{fig:sol3}, and we observe very good agreement.
In addition, the relative errors of the pressure $p$ in terms of the $Q$ norm
for various choices of number of basis functions
are presented in Table \ref{tab:convergence2}.
In particular, we use $2$ to $8$ basis functions per coarse edge
for boundary basis
and use $4$ to $20$ basis functions per coarse element for interior basis.
%are presented in Table \ref{tab:convergence2}.
From this table, we observe excellent performance of our method.

\begin{figure}[ht]
\centering
\includegraphics[scale=0.5]{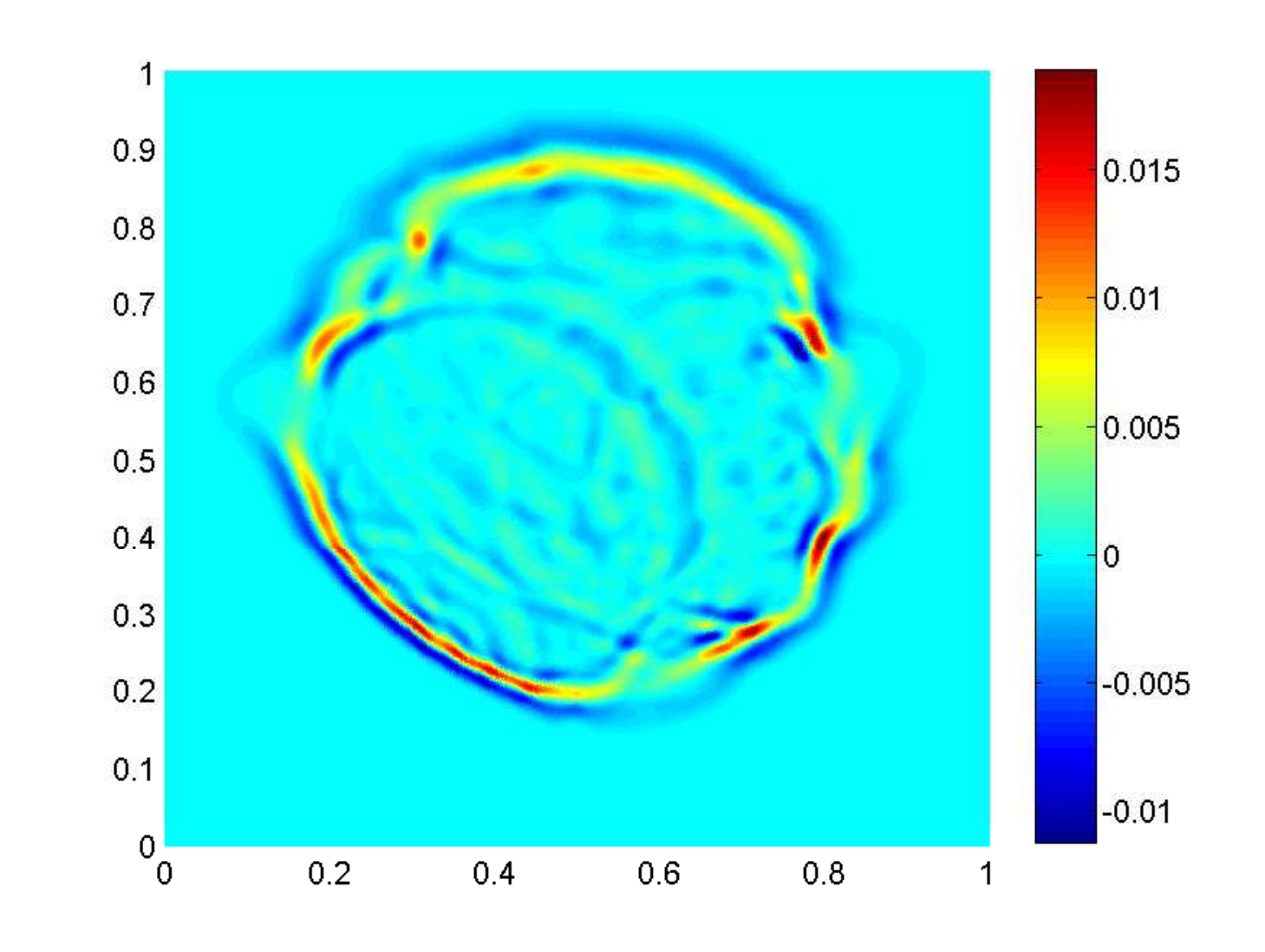} \includegraphics[scale=0.5]{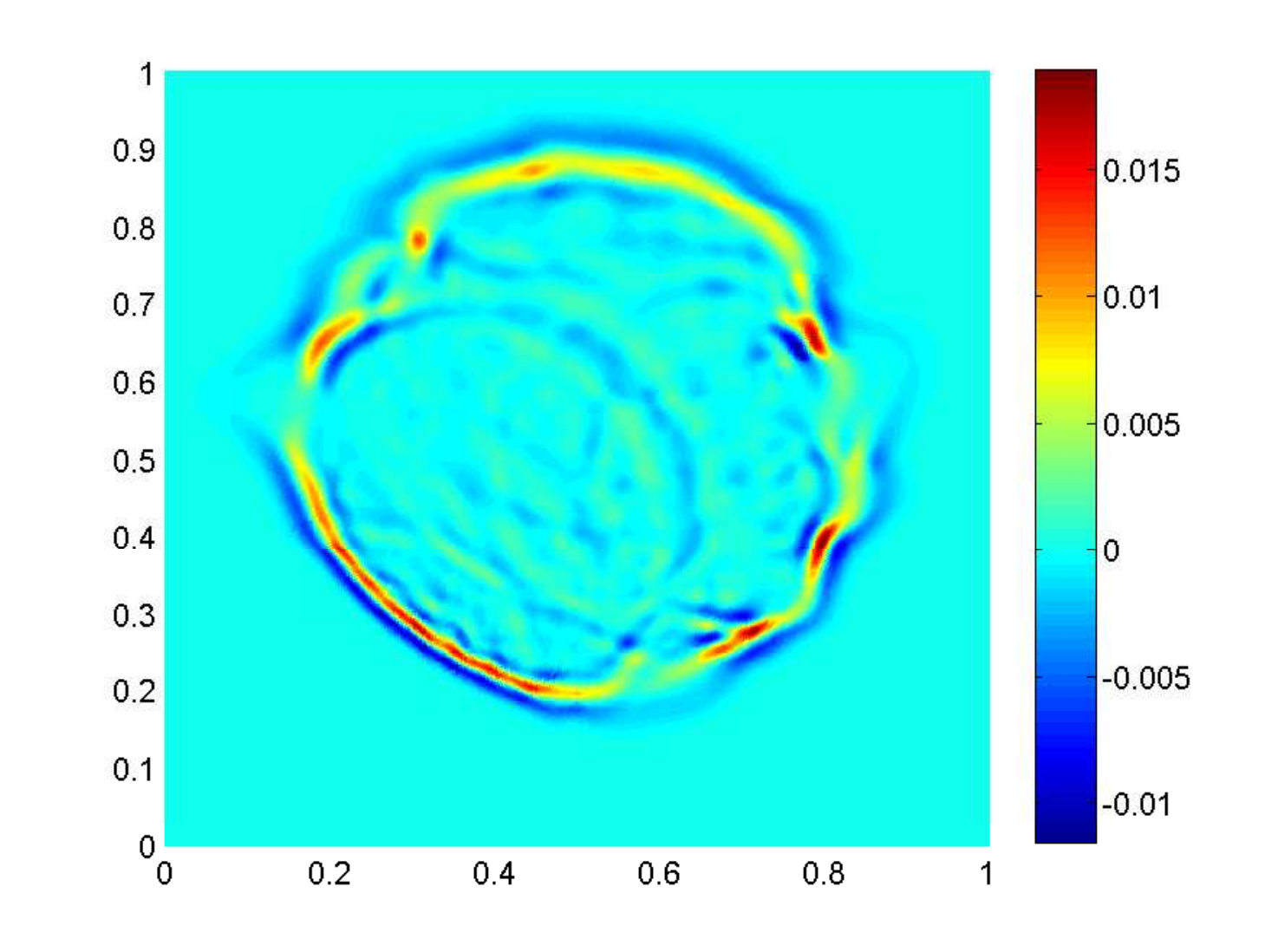}

\protect\caption{Left: The reference solution for the second example with $f_0=50$. Right: The corresponding multiscale solution for the second example using
$8$ boundary basis functions per coarse edge and $20$ interior basis functions per coarse element.}
\label{fig:sol3}
\end{figure}

\begin{table}[ht]
\centering
\begin{tabular}{|c|c|c|c|c|c|}
\hline
\# boundary basis\textbackslash{}\#interior basis & 4 & 8 & 12 & 16 & 20\tabularnewline
\hline
2 & 125.75\% & 128.19\% & 129.09\% & 129.29\% & 129.37\%\tabularnewline
\hline
4 & 91.55\% & 58.00\% & 61.29\% & 62.87\% & 63.48\%\tabularnewline
\hline
6 & 100.46\% & 36.11\% & 18.65\% & 16.95\% & 17.04\%\tabularnewline
\hline
8 & 101.04\% & 40.67\% & 16.95\% & 9.41\% & 6.92\%\tabularnewline
\hline
\end{tabular}
\caption{Convergence history for various choices of number of basis functions for the second example with $f_0=50$.}
\label{tab:convergence2}
\end{table}

\subsection{The Marmousi model with PML}
In the third example, we present an application of our method with the use of PML.
We assume that the computational domain $\Omega  = [0,1]^2$
and the medium is the part of the Marmousi model shown in Figure \ref{fig:mar}.
We take the source with $f_0=20$.
The coarse and fine mesh sizes for the computational domain $\Omega$ are $H=1/8$ and $h=1/64$ respectively.
To absorb the outgoing waves,
we use a PML with a width of $10$ fine grid blocks.
We apply our mixed GMsFEM within the computational domain $\Omega$
and the standard fine-grid method (\ref{eq:me1})-(\ref{eq:me2}) within the artificial layer.
In Figure \ref{fig:pml1}, we present the multiscale scale solutions
at different times $T$.
We see that the PML is able to absorb the outgoing waves without much artificial reflection.

%some numerical result applying PML in conjunction with our method. We consider the computational domain $\Omega$, is $[0,1]\times[0,1]$. The domain $\Omega$ is partitioned into a the staggered triangular coarse mesh with mesh size $H = 1/8$, then the coarse mesh is sub-partitioned into a triangular fine mesh with mesh size $h=1/64$. The source term and medium parameter are the same as the example in section \ref{Marmousi case}. In order to apply PML, we will extend the fine mesh by 10 layers of square fine grid blocks with width $h=1/64$.

%The reference solution $u_h$ is computed in the extended fine mesh. The multiscale solution is computed in the coarse mesh with extended fine grid layers. That is, the extended multiscale finite element space is the union of the coarse grid multiscale finite element space and the fine grid finite element space in the layer grid.

\begin{figure}[ht]
\centering
\includegraphics[scale=0.5]{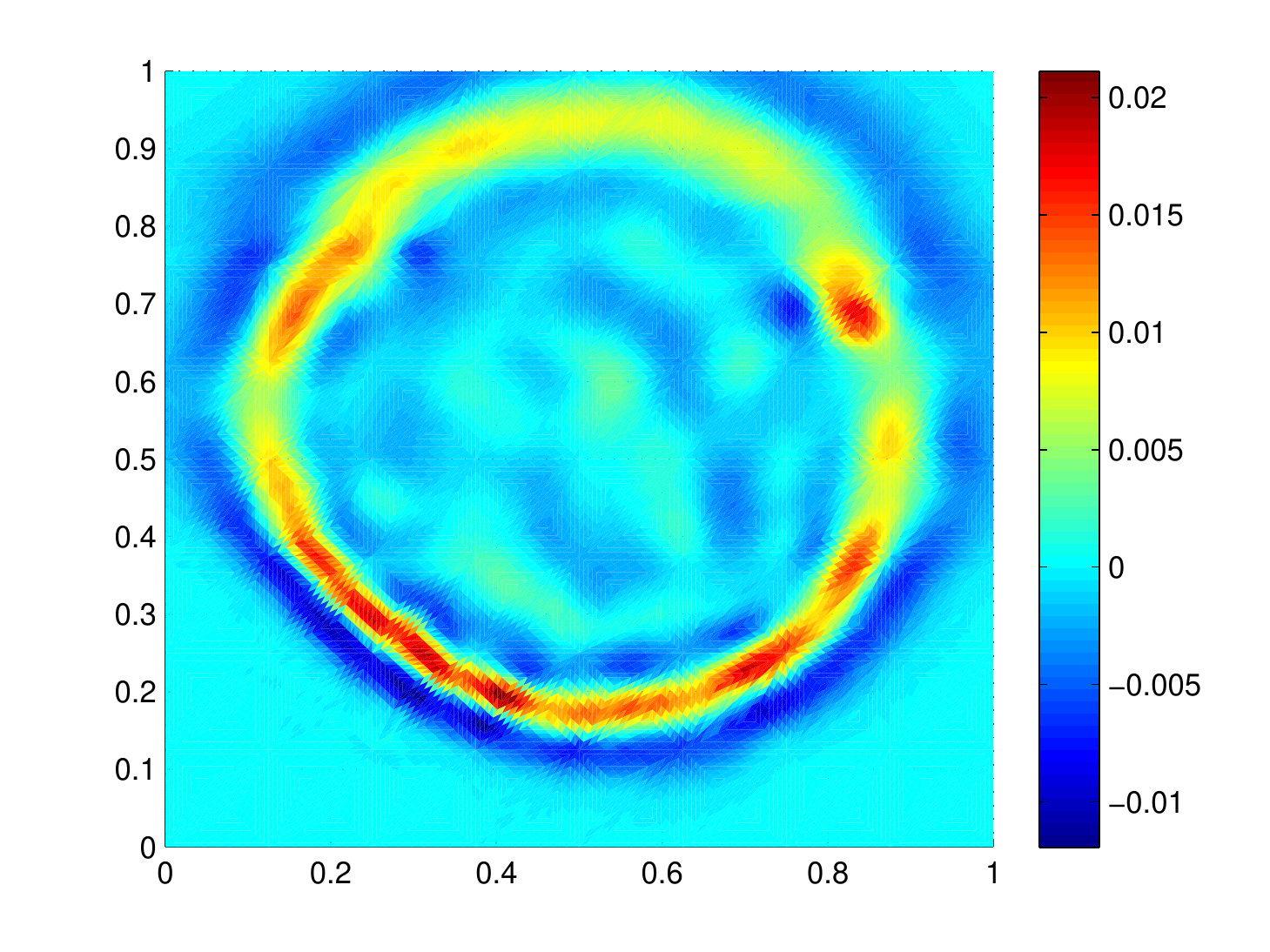} \includegraphics[scale=0.5]{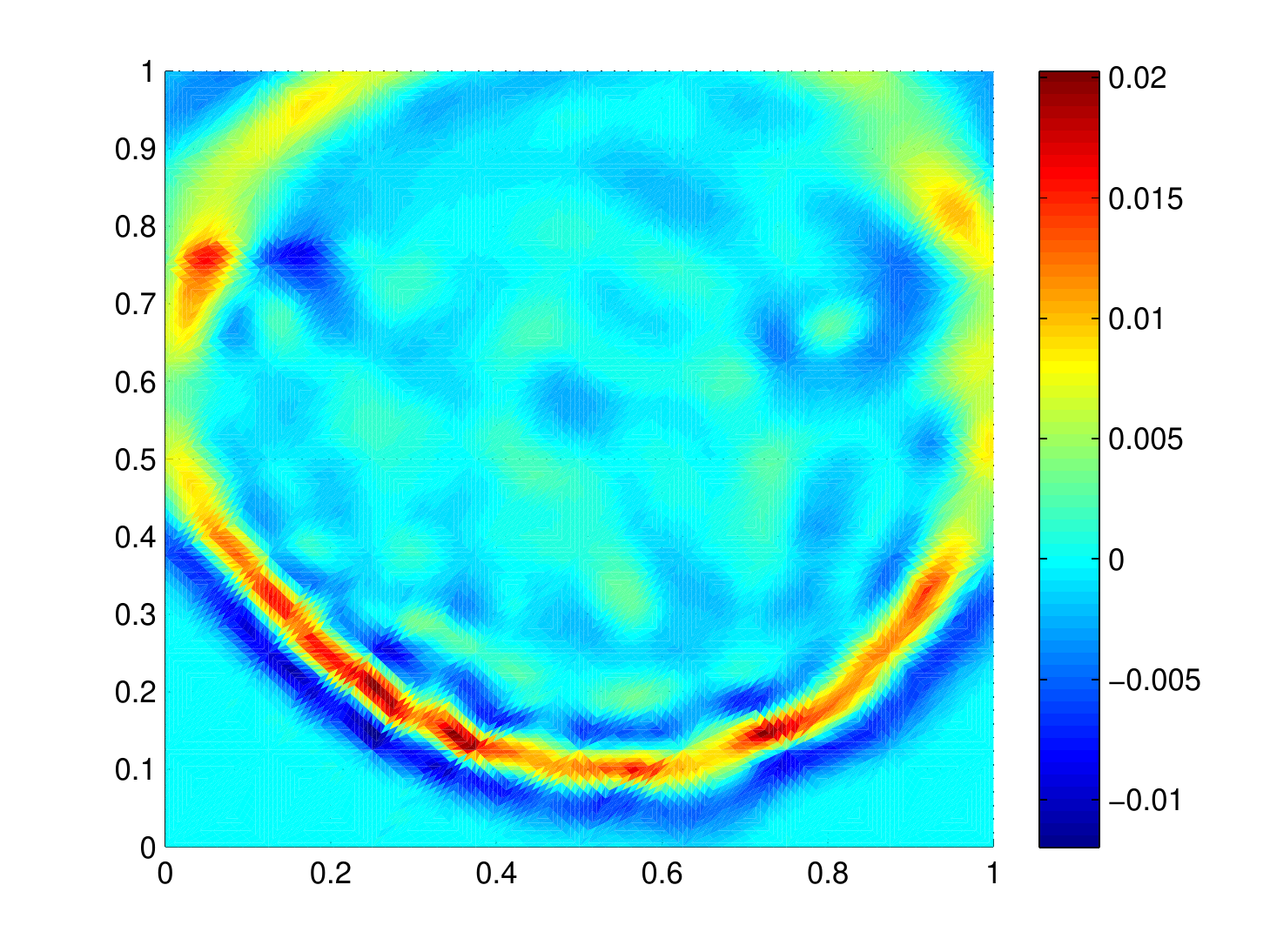}

\includegraphics[scale=0.5]{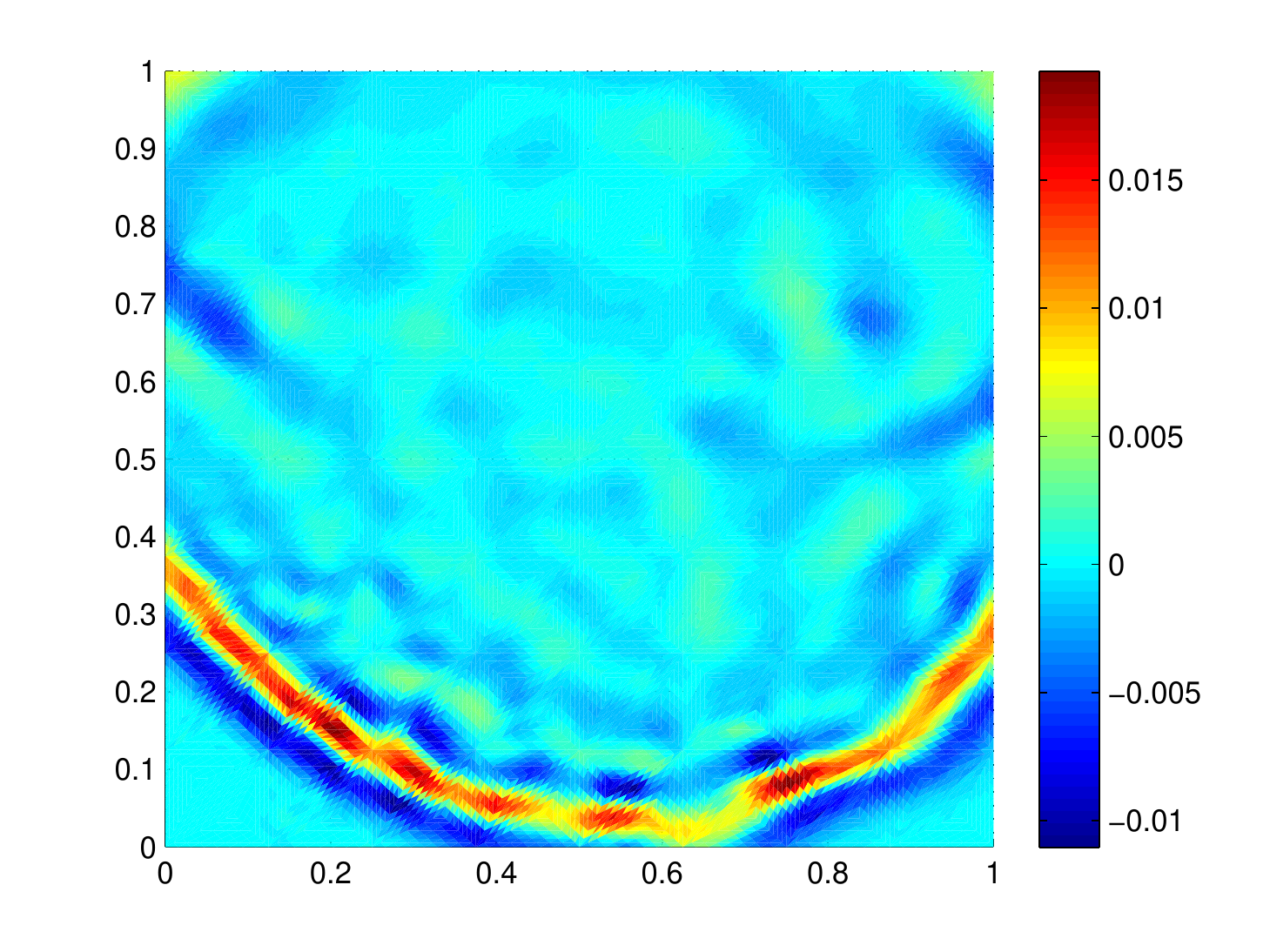}\includegraphics[scale=0.5]{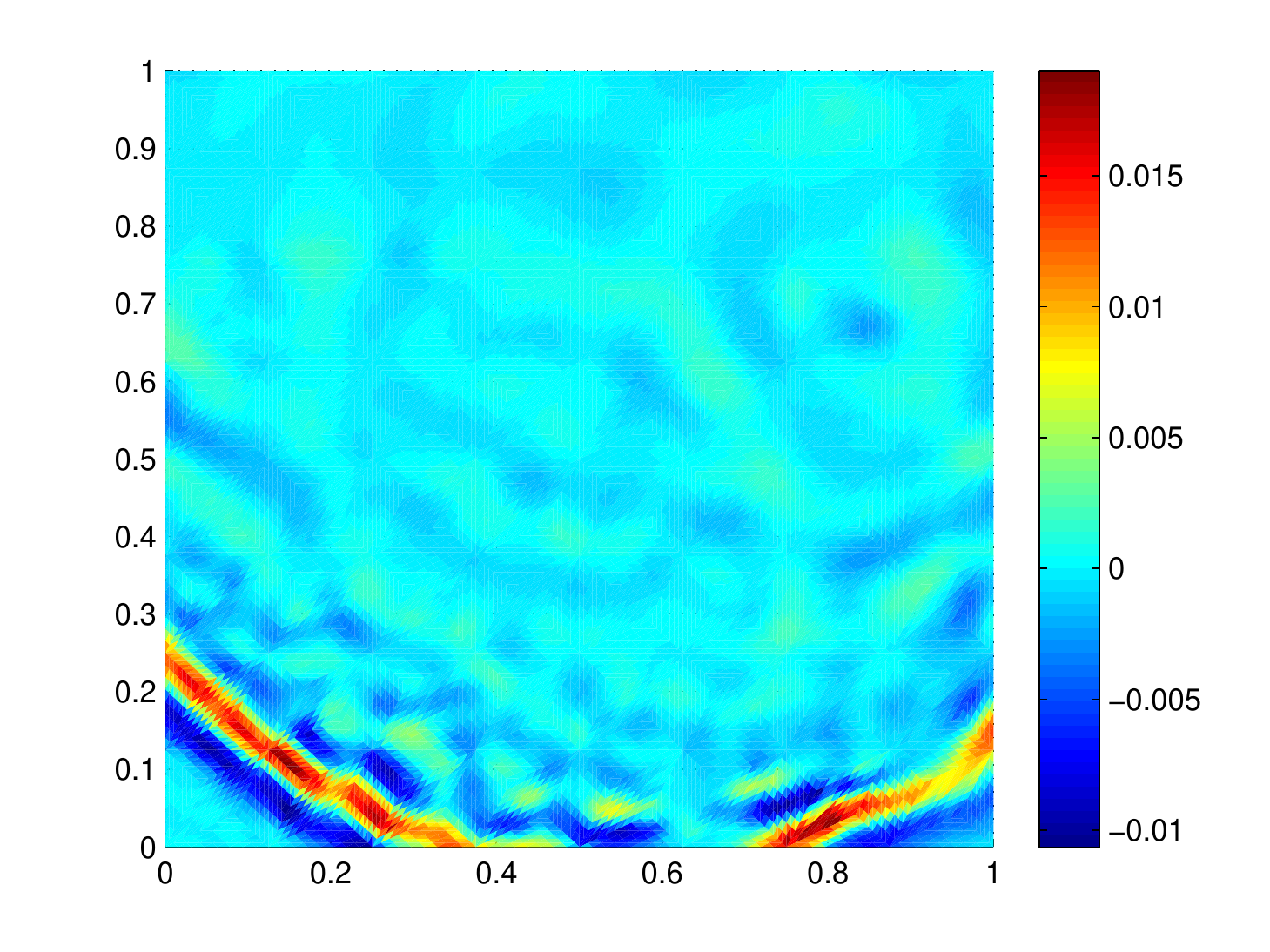}

\protect\caption{The multiscale solution at different times $T$ for the third example. Top-Left: $T=0.24$, Top-Right: $T=0.28$, Bottom-Left:
$T=0.32$, Bottom-Right: $T=0.36$.}
\label{fig:pml1}
\end{figure}

%\begin{table}[ht]
%\centering
%\begin{tabular}{|c|c|}
%\hline
%T & $L_{2}$error for $p$\tabularnewline
%\hline
%0.24 & 6.30\%\tabularnewline
%\hline
%0.28 & 7.45\%\tabularnewline
%\hline
%0.32 & 9.26\%\tabularnewline
%\hline
%0.36 & 12.17\%\tabularnewline
%\hline
%\end{tabular}

%\protect\caption{error for 6 bdy basis, 12 int basis (Do we need?)}

%\end{table}

\section{Conclusion}

We develop and analyze a mixed GMsFEM for wave propagation in highly heterogeneous media.
The method is based on a mixed Galerkin global solver, and some local multiscale
basis functions for both the pressure and the velocity.
The multiscale basis functions are obtained by solving local spectral problems defined in some snapshot spaces.
The spectral problems give a natural ordering of the basis functions, which can be added
to the approximation space to give a spectral convergence.
By using a staggered coarse mesh and a carefully designed pair of multiscale basis spaces
with staggered continuity, our method is energy conserving and has block diagonal mass matrix.
Numerical results show excellent performance for our proposed approach.

\bibliographystyle{plain}
\bibliography{segabs_ceg,EricPaper,EricRef}

\end{document}